\DeclareMathAlphabet{\mathcalligra}{T1}{calligra}{m}{n}
\DeclareFontShape{T1}{calligra}{m}{n}{<->s*[2.2]callig15}{}
\newcommand{\scr}{\mathcalligra{r}\,}
\theoremstyle{plain}
\newtheorem{theorem}{Theorem}[section]
\newtheorem{lemma}[theorem]{Lemma}
\newtheorem{definition}[theorem]{Definition}
\newtheorem{proposition}[theorem]{Proposition}
\newtheorem{cor}[theorem]{Corollary}
\newtheorem{remark}[theorem]{Remark}
\numberwithin{equation}{section}
\newcommand{\Nfgs}[2]{\pi^N(#1,\, #2)}
\newcommand{\Ufgs}[2]{\pi^{\mathrm{uni}}(#1,\, #2)}
\begin{document}
\baselineskip=15.5pt

\title{On an extension of Nori and local fundamental group schemes}
\author{Pavan~Adroja}
\address{Department of Mathematics, IIT Gandhinagar,
 Near Village Palaj, Gandhinagar - 382355, India}
 \email{pavan.a@iitgn.ac.in}
\author{Sanjay~Amrutiya}
\address{Department of Mathematics, IIT Gandhinagar,
 Near Village Palaj, Gandhinagar - 382355, India}
 \email{samrutiya@iitgn.ac.in}
\thanks{The research work of Pavan Adroja is financially supported by CSIR-UGC fellowship under 
the grant no. 191620092279.}
\subjclass[2000]{Primary: 14L15, 14F05}
\keywords{Fundamental group schemes, Abelian varieties}
\date{}

\begin{abstract}
This article studies an extended Nori and local fundamental group schemes of Abelian varieties.
We also discuss the birational invariance of these group schemes and study their behaviour under 
the Albanese and \'etale morphisms.
\end{abstract}

\maketitle

\section{Introduction}
Madhav Nori \cite{No} has introduced the fundamental group scheme of a reduced and 
connected scheme $X$ over a field $k$. If $X$ is a proper integral scheme over a field 
$k$ with $x\in X(k)\neq \emptyset$, then the Nori fundamental group scheme, denoted by 
$\Nfgs{X}{x}$, is Tannaka dual to a neutral Tannakian category of \emph{essentially finite}
vector bundles over $X$. Recall that a vector bundle $E$ over $X$ is said to be
\emph{finite} if there are two distinct polynomials $f$ and $g$ with non-negative integer 
coefficients such that $f(E)$ is isomorphic to $g(E)$. A vector bundle $E$ over $X$ is said 
to be \emph{Nori--semistable} if for every non-constant morphism $f\,:\, C\,\longrightarrow\, X$ 
with $C$ a smooth projective curve, the pull-back $f^*E\,\longrightarrow\, C$ is semi-stable 
of degree zero. A vector bundle $E$ over $X$ is said to be \emph{essentially finite} if it 
is a Nori--semistable subquotient of a finite vector bundle over $X$. The category 
$\mathcal{C}^{\rm N}(X)$ of essentially finite vector bundles is a full subcategory of 
\textit{Nori semistable} bundles.

Adrian Langer \cite{La11, La12} studied various properties of so-called the 
$S$-fundamental group scheme. The $S$-fundamental group scheme of $(X, x)$, denoted by 
$\pi^S(X, x)$ is Tannaka dual to the category of numerically flat bundles. Recall that
a vector bundle is numerically flat if and only if it is Nori--semistable. He also gives 
the structure theorem of $\pi^S(X, x)$ for abelian varieties.

Otabe \cite{Ot} considers a slightly bigger category than the category of
finite bundles in characteristic zero. He defined the semi--finite bundle and proved 
that the category of semi-finite bundles over $X$ form the Tannakian category.  
The corresponding affine group scheme is called the extended Nori fundamental group 
scheme, and is denoted by $\pi^{\mathrm{EN}}(X,x)$. If $C$ is an elliptic curve over the field of characteristic 
zero, then $\pi^{\mathrm{EN}}(C,x)$ is the product of $\pi^{N}(C,x)$ and $\pi^{\rm uni}(C,x)$. The 
analoge of a semi-finite bundle over the field of positive characteristic is discussed in \cite{A20}. 
It is proved that for an elliptic curve $C$ over the field of positive characteristic, 
$\pi^{\mathrm{EN}}(C,x)$ is isomorphic to $\pi^{N}(C,x)$. In this paper, we generalize these two 
results to higher dimensional abelian varieties (see Theorem \ref{AVST} and Proposition \ref{ABSTP}).

If the characteristic of $k$ is zero, then the birational invariance of the $S$-fundamental group scheme 
follows from the birational invariance of the topological fundamental group. It is also known that the 
Nori fundamental group scheme is birational invariant for smooth projective varieties \cite{No}. 
If char $k = p > 0$, then the birational invariance of the $S$-fundamental group scheme is proved in 
\cite{HM}. One can use the techniques of \cite{HM} to show the birational invariance of 
$\pi^{\mathrm{EN}}(X,x)$. In fact, the proof of birational invariance of $\pi^N(X, x)$ and 
$\pi^{\rm loc}(X, x)$ is inbuilt in the proof of \cite[Theorem 1.2]{HM}. In Corollary \ref{BIP}, 
we provide additional details for completeness. We also study the behaviour of the extended Nori 
fundamental group scheme under the Albanese and \'etale morphisms (see Section \ref{sec-properties}). 
In the last section, we introduce an extension of the local fundamental group scheme and study it in 
a similar spirit.

\section{Preliminaries}
Let $X$ be a proper integral scheme defined over a perfect field $k$ endowed with a 
rational point $x\in X(k)$.

\begin{definition}
A vector bundle $V$ on $X$ is called \textit{unipotent bundle} if it has a filtration 
$$
V=V_0\supset V_1 \supset V_2 \supset ... \supset V_n=0
$$ 
such that $V_i/V_{i+1}\simeq \mathcal{O}_X$ for all $i$.
\end{definition}

The category of unipotent vector bundles $\mathcal{C}^{\rm{uni}}(X)$ over $X$ form 
the Tannakian category and the corresponding affine group scheme $\Ufgs{X}{x}$ is called 
the unipotent group scheme of $X$ \cite[Chapter IV]{No}. Let $\pi^{\rm{uni}}_{\rm ab}(X,x)$ 
be the largest abelian quotient of $\pi^{\rm{uni}}(X,x)$.

\begin{proposition}{\rm \cite[Chapter IV]{No}}
Let $X$ be a proper connected reduced scheme over the algebraically closed field 
$k$ with $x\in X(k)$. If $k$ has characteristic zero, then we have
$$\pi^{\rm{uni}}_{\rm ab}(X,x) \simeq H^1(X,\mathcal{O}_X)^*.$$
If the characteristic of $k$ is positive, then we have
$$\pi^{\rm{uni}}_{\rm ab}(X,x) \simeq \varprojlim \hat{G},$$
where $\hat{G}$ denotes the Cartier dual of $G$, and the inverse limit is taken over 
all finite local group subschemes $G$ of $\mathrm{Pic}^0\:X$.
\end{proposition}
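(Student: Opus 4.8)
The plan is to reduce both statements, via Tannaka duality, to the computation of groups of homomorphisms out of $\pi^{\rm uni}(X,x)$, and then to feed in the representability of the Picard scheme $\mathrm{Pic}_{X/k}$.

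\textbf{Characteristic zero.} I would first observe that $\pi^{\rm uni}_{\rm ab}(X,x)$, being a commutative pro-unipotent group scheme over a field of characteristic zero, is a pro-vector group, and any such group is canonically the linear dual of its space of additive characters; since $X$ is proper this space is finite dimensional, so in fact $\pi^{\rm uni}_{\rm ab}(X,x)$ is an ordinary algebraic vector group. As $\mathbb{G}_a$ is commutative,
$$\mathrm{Hom}_{k}\bigl(\pi^{\rm uni}_{\rm ab}(X,x),\mathbb{G}_a\bigr)=\mathrm{Hom}_{k}\bigl(\pi^{\rm uni}(X,x),\mathbb{G}_a\bigr),$$
and by Tannaka duality a homomorphism to $\mathbb{G}_a\hookrightarrow\mathrm{GL}_2$ (the strictly upper triangular subgroup) is the same datum as an extension of $\mathcal{O}_X$ by $\mathcal{O}_X$ in $\mathcal{C}^{\rm uni}(X)$; hence the group above is $\mathrm{Ext}^1_{\mathcal{C}^{\rm uni}(X)}(\mathcal{O}_X,\mathcal{O}_X)$. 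Since the middle term of an extension of unipotent bundles is unipotent, $\mathcal{C}^{\rm uni}(X)$ is closed under extensions in $\mathrm{Coh}(X)$, so this equals $\mathrm{Ext}^1_{\mathcal{O}_X}(\mathcal{O}_X,\mathcal{O}_X)=H^1(X,\mathcal{O}_X)$, giving $\pi^{\rm uni}_{\rm ab}(X,x)\simeq H^1(X,\mathcal{O}_X)^{*}$.

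\textbf{Positive characteristic.} The first step is to recall (this is part of Nori's analysis) that in positive characteristic every unipotent bundle on $X$ has finite monodromy, so $\pi^{\rm uni}(X,x)$, hence also $\pi^{\rm uni}_{\rm ab}(X,x)$, is pro-finite: the cohomology groups carrying the extension classes of the successive trivial quotients are finite dimensional, and the $p$-linear Frobenius acting on them decomposes into a bijective part and a nilpotent part, which forces an additive-polynomial relation and thus a finite image in the unipotent upper triangular group. Therefore $\pi^{\rm uni}_{\rm ab}(X,x)=\varprojlim_{\Gamma}\Gamma$ over its finite commutative unipotent quotients $\Gamma$. Next, for such a $\Gamma$ a torsor $T\to X$ produces the unipotent bundle $\pi_{*}\mathcal{O}_{T}$ (the regular representation being a unipotent $\Gamma$-module) and conversely, so one has the usual correspondence between $\Gamma$-torsors over $X$ and homomorphisms out of the fundamental group scheme; as $\Gamma$ is commutative,
$$\mathrm{Hom}_{k}\bigl(\pi^{\rm uni}_{\rm ab}(X,x),\Gamma\bigr)=H^1_{\mathrm{fppf}}(X,\Gamma).$$
Then I would apply the identification $H^1_{\mathrm{fppf}}(X,\Gamma)\simeq\mathrm{Hom}_{k}(\Gamma^{\vee},\mathrm{Pic}_{X/k})$, obtained by pushing $\mathbb{G}_m$ forward along $X\to\mathrm{Spec}\,k$ (whose $R^1$ is $\mathrm{Pic}_{X/k}$), writing $\Gamma=\mathcal{H}om(\Gamma^{\vee},\mathbb{G}_m)$, and using that $H^{\geq1}_{\mathrm{fppf}}(\mathrm{Spec}\,k,-)$ vanishes on finite group schemes over the algebraically closed field $k$. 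Since $\Gamma$ is unipotent, $\Gamma^{\vee}$ is infinitesimal, so every homomorphism $\Gamma^{\vee}\to\mathrm{Pic}_{X/k}$ lands in $\mathrm{Pic}^0_{X/k}$ and factors through a finite local subgroup scheme $G\subseteq\mathrm{Pic}^0_{X/k}$; hence, invoking Cartier duality in the middle,
$$\mathrm{Hom}_{k}\bigl(\pi^{\rm uni}_{\rm ab}(X,x),\Gamma\bigr)=\varinjlim_{G}\mathrm{Hom}_{k}(\Gamma^{\vee},G)=\varinjlim_{G}\mathrm{Hom}_{k}(\hat G,\Gamma)=\mathrm{Hom}_{k}\Bigl(\varprojlim_{G}\hat G,\ \Gamma\Bigr).$$
Being natural in $\Gamma$, and both sides being pro-finite commutative unipotent group schemes, Yoneda (a pro-finite group scheme is determined by the functor it represents on finite group schemes) then yields $\pi^{\rm uni}_{\rm ab}(X,x)\simeq\varprojlim_{G}\hat G$.

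\textbf{Where the difficulty lies.} Apart from two inputs, this is formal bookkeeping with Tannakian categories and Cartier duality. The inputs are: the identification $H^1_{\mathrm{fppf}}(X,\Gamma)\simeq\mathrm{Hom}(\Gamma^{\vee},\mathrm{Pic}_{X/k})$, where the geometry of $X$ genuinely enters; and, in positive characteristic, the pro-finiteness of $\pi^{\rm uni}_{\rm ab}(X,x)$ — the finiteness of unipotent monodromy — which is the characteristic-$p$ phenomenon one must handle with care. A minor extra point is to check that the finite unipotent quotients of $\pi^{\rm uni}_{\rm ab}(X,x)$ and the Cartier duals of the finite local subgroup schemes of $\mathrm{Pic}^0 X$ are mutually cofinal, which comes from dualizing the correspondence between quotients and subobjects.
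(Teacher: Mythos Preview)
The paper does not give its own proof of this proposition: it is stated as a citation of \cite[Chapter IV]{No} and no proof environment follows. There is therefore nothing in the paper to compare your proposal against.

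For what it is worth, your outline is essentially Nori's original argument. In characteristic zero, Nori identifies additive characters of $\pi^{\rm uni}(X,x)$ with extensions of $\mathcal{O}_X$ by $\mathcal{O}_X$, i.e.\ with $H^1(X,\mathcal{O}_X)$, exactly as you do. In positive characteristic, he first shows (his Chapter~IV, Proposition~3, also invoked later in the present paper) that every unipotent bundle has finite monodromy, so $\pi^{\rm uni}(X,x)$ is profinite; then he reads off the abelianization from the torsor/Picard correspondence for finite commutative unipotent group schemes, which is your $H^1_{\mathrm{fppf}}(X,\Gamma)\simeq\mathrm{Hom}(\Gamma^{\vee},\mathrm{Pic}_{X/k})$ step. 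So your sketch matches the source argument, but there is no in-paper proof here to judge it against.
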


\subsection{An extension of the Nori fundamental group scheme}
S. Otabe \cite{Ot} introduced the notion of a semi-finite bundle when the base field $k$ 
is of characteristic zero. Note that an essentially finite vector bundle is finite if 
$k$ has characteristic zero \cite[Chapter-I, \S 3]{No}.

\begin{definition}
A vector bundle $V$ on $X$ is called \textit{semi--essentially finite} bundle if it has a filtration 
$$
V=V_0\supset V_1 \supset V_2 \supset ... \supset V_n=0
$$ such that $V_i/V_{i+1}$ is an essentially finite indecomposable bundle, for all $i$.
\end{definition} 

Let $\mathcal{C}^{\mathrm{EN}}(X)$ be the Tannakian category of semi--essentially finite bundles over $X$. The 
corresponding affine group scheme is denoted by $\pi^{\mathrm{EN}}(X,x)$.

\begin{theorem}{\rm \cite[Theorem 1.1]{Ot}}
Let $C$ be an elliptic curve over an algebraically closed field $k$ of characteristic zero. 
For $k$-rational point $x\in C(k)$, we have
$$\pi^{\mathrm{EN}}(C,x)\simeq \pi^{N}(C,x)\times \pi^{\rm{uni}}(C,x).$$
\end{theorem}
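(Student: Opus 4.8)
The plan is to make the structure of semi--essentially finite bundles on $C$ completely explicit, deduce that the Tannakian category $\mathcal{C}^{\mathrm{EN}}(C)$ is generated by its two subcategories $\mathcal{C}^{\rm N}(C)$ and $\mathcal{C}^{\rm{uni}}(C)$, and then read off the product decomposition from the standard Tannakian criteria. First I would identify the essentially finite indecomposable bundles on $C$. Since $k$ has characteristic zero, an essentially finite bundle is finite, and $\pi^{N}(C,x)$ coincides with the \'etale fundamental group scheme of $C$; as $C$ is an elliptic curve this is a profinite \emph{abelian} group scheme, so $\mathcal{C}^{\rm N}(C)$ is semisimple. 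By Atiyah's classification of bundles on an elliptic curve, every indecomposable bundle of degree zero has the form $L\otimes F_r$, where $L\in\mathrm{Pic}^0 C$ and $F_r$ is the (unipotent) indecomposable bundle of rank $r$ obtained as a successive self-extension of $\mathcal{O}_C$; among these the simple bundles are exactly the line bundles, and such an $L$ is finite precisely when it is a torsion point of $\mathrm{Pic}^0 C$. Hence a bundle on $C$ is semi--essentially finite if and only if it admits a filtration all of whose graded pieces are torsion line bundles.

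Next I would prove a structure theorem for such bundles. For distinct $L,L'\in\mathrm{Pic}^0 C$ one has $\mathrm{Hom}(L,L')=0$ and, by Serre duality, $\mathrm{Ext}^1(L,L')\cong H^0(C,\,L\otimes (L')^{-1})^{\vee}=0$. An induction on the length of the filtration from the previous step then shows that any semi--essentially finite bundle $V$ splits into isotypic pieces,
$$
V\;\cong\;\bigoplus_{L}\, L\otimes U_{L},
$$
the sum running over the finitely many torsion line bundles occurring as composition factors of $V$, where $V_L$ is the summand all of whose composition factors equal $L$ and $U_L:=L^{-1}\otimes V_L$ is a unipotent bundle. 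Since each $L$ lies in $\mathcal{C}^{\rm N}(C)$ and each $U_L$ in $\mathcal{C}^{\rm{uni}}(C)$, this realizes every object of $\mathcal{C}^{\mathrm{EN}}(C)$ as a direct summand of a bundle built from objects of $\mathcal{C}^{\rm N}(C)$ and $\mathcal{C}^{\rm{uni}}(C)$ using $\otimes$ and $\oplus$; equivalently, these two Tannakian subcategories of $\mathcal{C}^{\mathrm{EN}}(C)$ generate it. (Nori--semistability of $L\otimes U_L$, and hence of $V$, is automatic, being a degree-zero iterated extension on a curve.)

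Finally I would invoke the Tannakian dictionary. The inclusions $\mathcal{C}^{\rm N}(C),\mathcal{C}^{\rm{uni}}(C)\hookrightarrow\mathcal{C}^{\mathrm{EN}}(C)$ correspond to faithfully flat quotients $\pi^{\mathrm{EN}}(C,x)\twoheadrightarrow\pi^{N}(C,x)$ and $\pi^{\mathrm{EN}}(C,x)\twoheadrightarrow\pi^{\rm{uni}}(C,x)$, hence to a homomorphism $\phi\colon\pi^{\mathrm{EN}}(C,x)\to\pi^{N}(C,x)\times\pi^{\rm{uni}}(C,x)$. Because the two subcategories generate $\mathcal{C}^{\mathrm{EN}}(C)$, the map $\phi$ is a closed immersion. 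Because $\mathcal{C}^{\rm N}(C)\cap\mathcal{C}^{\rm{uni}}(C)$ is the trivial subcategory---a finite bundle is a direct sum of torsion line bundles, and if it is also unipotent all of its composition factors are $\mathcal{O}_C$, forcing it to be a trivial bundle---the map $\phi$ is faithfully flat. A homomorphism of affine $k$-group schemes that is simultaneously a closed immersion and faithfully flat is an isomorphism, which yields the stated decomposition.

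The step I expect to be the main obstacle is the structure theorem of the second paragraph: showing that an iterated extension of torsion line bundles on $C$ decomposes into isotypic summands, each a torsion line bundle tensored with a unipotent bundle. This combines Atiyah's classification with the vanishing of $\mathrm{Ext}^1$ between non-isomorphic degree-zero line bundles on $C$; once it is in place, the remainder is the formal Tannakian correspondence together with the elementary verification that the categories involved are closed under the relevant operations.
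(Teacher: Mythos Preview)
Your argument is correct. The paper does not reprove this cited statement directly; instead it establishes the generalization to an arbitrary abelian variety $A$ (Theorem~\ref{AVST}, with the shortcut noted in Remark~\ref{R1}) by a genuinely different route. Rather than classifying bundles via Atiyah and the vanishing of $\mathrm{Ext}^1$ between non-isomorphic degree-zero line bundles, the paper first observes that $\pi^{\rm EN}(A,0)$ is commutative---being a quotient of Langer's commutative $\pi^S(A,0)$---and then invokes the decomposition of a commutative affine $k$-group scheme into its unipotent and diagonalizable factors \cite[9.5]{W}; Theorem~\ref{ABEN} identifies these two factors as $\pi^{\rm uni}(A,0)$ and $\mathrm{Diag}(\mathbb{F}_\tau)$, and the latter is matched with $\pi^N(A,0)\simeq\varprojlim A_n$ via the torsion of $\mathrm{Pic}^0A$. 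Your approach has the virtue of producing an explicit description of every object of $\mathcal{C}^{\rm EN}(C)$ as $\bigoplus_L L\otimes U_L$, but it is tied to Atiyah's classification and hence to dimension one; the paper's abstract argument trades this explicitness for a method that works uniformly over all abelian varieties without ever analysing individual bundles.
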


\begin{theorem}{\rm \cite[Theorem 4.3]{A20}}
Let $C$ be an elliptic curve over an algebraically closed field $k$ of positive characteristic. 
For $k$-rational point $x\in C(k)$, we have
$$\pi^{\mathrm{EN}}(C,x)\simeq \pi^{N}(C,x).$$
\end{theorem}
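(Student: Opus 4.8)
The plan is to prove the stronger statement that $\mathcal{C}^{\mathrm{EN}}(C)=\mathcal{C}^{N}(C)$ as Tannakian subcategories of the category of vector bundles on $C$, with the fibre functor given by the fibre at $x$; then $\pi^{\mathrm{EN}}(C,x)\simeq\pi^{N}(C,x)$ by Tannaka duality. The inclusion $\mathcal{C}^{N}(C)\subseteq\mathcal{C}^{\mathrm{EN}}(C)$ is automatic: an essentially finite bundle is a finite direct sum of indecomposables, each of which is a direct summand, hence Nori-semistable and essentially finite, so the evident filtration makes it semi--essentially finite. For the reverse inclusion, the filtration in the definition and an induction on its length reduce us to showing that $\mathcal{C}^{N}(C)$ is closed under extensions inside all vector bundles: if $0\to W'\to V\to W''\to 0$ is exact with $W',W''$ essentially finite, then $V$ is essentially finite.

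The key structural input is Atiyah's classification: every indecomposable vector bundle of degree $0$ on the elliptic curve $C$ has the form $F_{r}\otimes L$ with $r\ge 1$ and $L\in\mathrm{Pic}^{0}(C)$, where $F_{1}=\mathcal{O}_{C}$ and $F_{r}$ is the (unique) non-split extension of $F_{r-1}$ by $\mathcal{O}_{C}$. From the exact sequence $0\to\mathcal{O}_{C}\to F_{r}\to F_{r-1}\to 0$ one gets $L\hookrightarrow F_{r}\otimes L$, so if $F_{r}\otimes L$ is essentially finite then $L$ is a Nori-semistable subbundle of an essentially finite bundle, hence essentially finite, hence a torsion point of $\mathrm{Pic}^{0}(C)$. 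The crucial --- and, I expect, the most delicate --- point is that \emph{over a field of positive characteristic the Atiyah bundle $F_{r}$ is a finite vector bundle}. The mechanism is that $[p]^{*}$ acts on $H^{1}(C,\mathcal{O}_{C})=\mathrm{Lie}(\mathrm{Pic}^{0}C)$ as multiplication by $p$, i.e.\ as $0$; since the successive extension classes defining $F_{r}$ lie in groups assembled from $H^{1}(C,\mathcal{O}_{C})$, applying $[p]^{*}$ once kills the outermost class, and an induction (pulling back $0\to\mathcal{O}_{C}\to F_{r}\to F_{r-1}\to 0$ by a suitable power $[p^{c}]$ after $F_{r-1}$ has been trivialised) yields $[p^{r-1}]^{*}F_{r}\cong\mathcal{O}_{C}^{\oplus r}$. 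Since $[p^{r-1}]\colon C\to C$ is a torsor under the finite group scheme $C[p^{r-1}]$, the bundle $F_{r}$ is trivialised by a finite-group-scheme torsor and is therefore finite. This is precisely where positive characteristic is used: over a field of characteristic $0$, $[n]^{*}$ is invertible on $H^{1}(C,\mathcal{O}_{C})$, so $F_{2}$ is not finite and $\pi^{\mathrm{EN}}\neq\pi^{N}$.

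Granting this, the proof concludes as follows. The bundle $V$ is Nori-semistable, being an extension of semistable degree-$0$ bundles, a property preserved under pullback to smooth projective curves, so each of its indecomposable summands is of the form $F_{t}\otimes N$ with $N\in\mathrm{Pic}^{0}(C)$. Working in the abelian category of semistable sheaves of slope $0$, the Jordan--H\"older factors of $V$ coincide with those of $W'\oplus W''$; as the Jordan--H\"older factors of an essentially finite indecomposable $F_{r}\otimes L$ are copies of the torsion line bundle $L$, every $N$ occurring in $V$ is torsion. Finally $F_{t}$ is finite, a torsion line bundle is finite, and a tensor product of finite bundles is finite, so each summand $F_{t}\otimes N$, and hence $V$, is finite. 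Thus $\mathcal{C}^{\mathrm{EN}}(C)\subseteq\mathcal{C}^{N}(C)$, which proves the theorem; in fact the argument shows $\mathcal{C}^{\mathrm{EN}}(C)=\mathcal{C}^{N}(C)$ is the category of finite bundles on $C$.
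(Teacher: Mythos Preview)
Your argument is correct. One small terminological point: from the trivialisation $[p^{r-1}]^{*}F_{r}\cong\mathcal{O}_{C}^{\oplus r}$ via the torsor $[p^{r-1}]\colon C\to C$ under the finite group scheme $C[p^{r-1}]$ you obtain that $F_{r}$ is \emph{essentially finite}; this is all you need, and whether $F_{r}$ is literally \emph{finite} in Nori's sense need not be addressed. With ``finite'' read as ``essentially finite'' in the final paragraph, everything goes through.

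The paper does not prove this statement itself---it is quoted from \cite{A20}---but it does prove the generalisation to arbitrary abelian varieties (Remark~\ref{R2} and Proposition~\ref{ABSTP}) by a genuinely different method. Instead of comparing the Tannakian categories directly, the paper first establishes a structure theorem for the abelianisation $\pi^{\mathrm{EN}}_{\rm ab}(X,x)$ (Theorem~\ref{ABEN}): in positive characteristic its unipotent part coincides with $\pi^{\rm uni}_{\rm ab}(X,x)$ and its diagonal part is $\mathrm{Diag}(\mathbb{F}_{\tau})$. Since $\mathcal{C}^{\rm uni}(X)\subset\mathcal{C}^{N}(X)$ in positive characteristic, both parts agree with those of $\pi^{N}_{\rm ab}(X,x)$, whence $\pi^{\mathrm{EN}}_{\rm ab}(X,x)\simeq\pi^{N}_{\rm ab}(X,x)$; for an abelian variety both group schemes are already abelian, so the isomorphism holds without abelianising. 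Your route is category-level and constructive, resting on Atiyah's classification and the explicit trivialisation of $F_{r}$ under $[p^{r-1}]^{*}$; it yields the sharper conclusion $\mathcal{C}^{\mathrm{EN}}(C)=\mathcal{C}^{N}(C)$ but is confined to dimension one, where such a classification is available. The paper's route is group-scheme-theoretic, uniform in the dimension of the abelian variety, and never touches individual bundles, at the cost of giving no direct description of which bundles lie in $\mathcal{C}^{N}$.
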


\subsection{The $S$-fundamental group scheme}
Let $X$ be a smooth projective $k$-variety. Recall that a line bundle $L$ over $X$ is said 
to be \emph{numerically effective} if the degree of the restriction of $L$ to any irreducible 
curve $C$ in $X$ is non-negative. A vector bundle $E$ over $X$ is called numerically effective
if and only if the tautological line bundle $\mathcal{O}_{\mathbb{P}(E)}(1)$ is numerically 
effective. A vector bundle $E$ is called \emph{numerically flat} if both $E$ and its dual $E^\vee$ 
are numerically effective.

The category $\mathcal{C}^{\mathrm{nf}}(X)$ of numerically flat bundles over $X$ with a $k$-point $x$ form a neutral Tannakian 
category and the corresponding affine group scheme is called the $S$-fundamental group scheme, 
denote by $\pi^S(X, x)$ (see \cite{La11, La12} for other equivalent descriptions and various 
properties of this group scheme).

Let $G$ be any abelian group. Then we can define a diagonal group scheme $\mathrm{Diag}(G)$ as 
a functor $\mathrm{Diag}(G)\colon \mathbf{Alg}_k\longrightarrow \mathbf{Ab}$ by 
$\mathrm{Diag}(G)(R)=\mathrm{Hom}_{gr}(G, R^{\times})$. Clearly, the functor $\mathrm{Diag}(G)$ 
is represented by the $k$-group algbera $k[G]$. So, the group scheme $\mathrm{Diag}(G)$ is the 
same as taking $\mathrm{Spec}\,k[G]$ with the natural $k$-group scheme structure.

Let $\mathrm{Pic}\: X$ denote the Picard group scheme of $X$. We denote by $\mathrm{Pic}^0 X$ its 
connected component of the identity and by $\mathrm{Pic}^\tau X$ the torsion component of the identity.
Recall that the Neron--Severi group 
$\mathrm{NS}(X) = (\mathrm{Pic} X)_{\mathrm{red}}/(\mathrm{Pic}^0 X)_{\mathrm{red}}$
is finitely generated. We have a short exact sequence
$$
0 \rightarrow (\mathrm{Pic}^0 X)_{\mathrm{red}} \rightarrow (\mathrm{Pic}^\tau X)_{\mathrm{red}} 
\rightarrow \mathrm{NS}(X)_{\mathrm{tor}} \rightarrow 0,
$$
where $\mathrm{NS}(X)_{\mathrm{tor}}$ is the torsion group of $\mathrm{NS}(X)$. Therefore, we have
a short exact sequence
$$
0 \rightarrow \mathrm{Diag}(\mathrm{NS}(X)_{\mathrm{tor}})\rightarrow 
\mathrm{Diag}((\mathrm{Pic}^\tau X)_{\mathrm{red}}) \rightarrow 
\mathrm{Diag}((\mathrm{Pic}^0 X)_{\mathrm{red}}) \rightarrow 0.
$$
We refer to \cite{K05} for more details about the Picard scheme.

\begin{theorem}{\rm \cite[Theorem 5.9]{La12}}
Let $X$ be a smooth projective variety defined over an algebraically closed field $k$. 
If $\mathrm{char}\;k=0$, then we have an isomorphism
$$
\pi_{\rm ab}^S(X,x)\simeq 
H^1(X,\mathcal{O}_X)^* \times \mathrm{Diag}(\mathrm{Pic}^\tau X)\,.
$$
If $\mathrm{char}\:k=p>0$, then we have 
$$
\pi_{\rm ab}^{S}(X,x)\simeq \varprojlim \hat{G} \times \mathrm{Diag}((\mathrm{Pic}^\tau X)_{\mathrm{red}})\,,
$$
where $\hat{G}$ denotes the Cartier dual of $G$ and the inverse limit is taken over all 
finite local group subschemes $G$ of $\mathrm{Pic}^0 X$.
\end{theorem}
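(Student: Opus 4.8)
The plan is to read $\pi_{\rm ab}^S(X,x)$ off its Tannakian category, using the structure theory of commutative affine group schemes to split it into a diagonalizable part and a pro-unipotent part. Since $\pi_{\rm ab}^S(X,x)$ is a quotient of the affine group scheme $\pi^S(X,x)$ it is a commutative affine (pro-algebraic) $k$-group, and over the algebraically closed field $k$ it decomposes canonically and functorially as a direct product $\pi_{\rm ab}^S(X,x)\cong M\times U$ with $M$ of multiplicative type — hence diagonalizable — and $U$ pro-unipotent; functoriality of this decomposition is what permits passage to the pro-algebraic limit. Writing $\mathcal{C}^{S}_{\rm ab}(X)\subseteq\mathcal{C}^{\mathrm{nf}}(X)$ for the full Tannakian subcategory attached to $\pi_{\rm ab}^S(X,x)$ — concretely, the numerically flat bundles whose monodromy group is abelian — it then suffices to identify $M$ and $U$ separately.

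For the diagonalizable factor, $M\cong\mathrm{Diag}(\Gamma)$, where $\Gamma$ is the group of characters of $\pi^S(X,x)$; by Tannakian duality $\Gamma$ is the group, under tensor product, of isomorphism classes of rank one objects of $\mathcal{C}^{\mathrm{nf}}(X)$, that is, of numerically flat line bundles on $X$. A line bundle $L$ is numerically flat exactly when $L$ and $L^{-1}$ are both numerically effective, i.e. when $L$ is numerically trivial, and for a smooth projective variety over an algebraically closed field the numerically trivial line bundles are precisely the $k$-points of $\mathrm{Pic}^\tau X$ — equivalently of $(\mathrm{Pic}^\tau X)_{\mathrm{red}}$, since passing to the associated reduced scheme does not change $k$-points. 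Hence $M\cong\mathrm{Diag}(\mathrm{Pic}^\tau X)$ when $\mathrm{char}\,k=0$ and $M\cong\mathrm{Diag}((\mathrm{Pic}^\tau X)_{\mathrm{red}})$ when $\mathrm{char}\,k=p$; the infinitesimal part of $\mathrm{Pic}^0 X$ contributes nothing here, as it will reappear in the unipotent factor.

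For the unipotent factor I would identify $U$ with the Tannaka dual of the full subcategory of $\mathcal{C}^{S}_{\rm ab}(X)$ on which $M$ acts trivially. An object of that subcategory is a representation of $\pi_{\rm ab}^S(X,x)$ with trivial $M$-action, hence with unipotent monodromy, hence a successive extension of $\mathcal{O}_X$, i.e. a unipotent bundle, and with abelian monodromy group; conversely every representation of $\pi^{\rm{uni}}_{\rm ab}(X,x)$ is such a bundle, because unipotent bundles are numerically flat and abelian monodromy forces the $\pi^S$-action to factor through $\pi_{\rm ab}^S(X,x)$ with $M$ acting trivially. So this subcategory is exactly the category of unipotent bundles with abelian monodromy, namely $\mathrm{Rep}(\pi^{\rm{uni}}_{\rm ab}(X,x))$, giving $U\cong\pi^{\rm{uni}}_{\rm ab}(X,x)$. (One must resist identifying $U$ with the dual of \emph{all} unipotent bundles, which is $\pi^{\rm{uni}}(X,x)$ and is typically non-abelian.) The Proposition of Nori recalled above (\cite[Chapter IV]{No}) now evaluates $U$: it is $H^1(X,\mathcal{O}_X)^*$ when $\mathrm{char}\,k=0$, and $\varprojlim\hat{G}$, the inverse limit over all finite local group subschemes $G$ of $\mathrm{Pic}^0 X$, when $\mathrm{char}\,k=p$. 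Substituting both descriptions into $\pi_{\rm ab}^S(X,x)\cong M\times U$ yields the two stated isomorphisms.

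The step I expect to be the main obstacle is the Tannakian bookkeeping around the unipotent factor: verifying that the $M$-fixed subcategory is neither larger nor smaller than $\mathrm{Rep}(\pi^{\rm{uni}}_{\rm ab}(X,x))$, and that $\pi_{\rm ab}^S(X,x)\cong M\times U$ is a genuine direct product rather than merely an extension — the point where algebraic closedness of $k$ enters, through the structure theory of commutative affine group schemes and the functoriality needed to take the pro-algebraic limit. The coincidence of numerically trivial line bundles with $\mathrm{Pic}^\tau X(k)$ is classical but is the one external input that is invoked rather than reproved here.
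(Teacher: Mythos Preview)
The paper does not prove this statement at all: it is quoted verbatim from Langer \cite[Theorem~5.9]{La12} as background in the preliminaries, with no proof supplied. So there is no ``paper's own proof'' to compare against.

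That said, your sketch is correct and is essentially the argument Langer gives, and it is also the template the present paper follows when proving its own analogue, Theorem~\ref{ABEN}, for $\pi^{\mathrm{EN}}_{\rm ab}$. There the authors (i) identify the largest unipotent quotient of $\pi^{\mathrm{EN}}$ with $\pi^{\rm uni}$ by recognizing its representation category as $\mathcal{C}^{\rm uni}(X)$, then pass to abelianizations and invoke Nori's computation of $\pi^{\rm uni}_{\rm ab}$; (ii) identify the character group with the line bundles in the relevant category; and (iii) conclude via the product decomposition of a commutative affine group scheme into unipotent and diagonalizable parts, citing \cite[9.5]{W}. Your proof does the same three things with $\mathcal{C}^{\rm nf}(X)$ in place of $\mathcal{C}^{\mathrm{EN}}(X)$, and with the observation that numerically flat line bundles are exactly the numerically trivial ones, i.e.\ $\mathrm{Pic}^\tau X(k)$. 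One minor stylistic difference: you phrase step~(i) as identifying the $M$-fixed subcategory of $\mathcal{C}^S_{\rm ab}(X)$, whereas the paper (and Langer) phrase it as identifying the largest unipotent \emph{quotient} of the group scheme and showing its representation category is $\mathcal{C}^{\rm uni}(X)$; the latter is slightly cleaner because it sidesteps the bookkeeping you flag as the ``main obstacle.''
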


\noindent \textbf{Abelian varieties:}
Let $A$ be an abelian variety of dimension $g$ over an algebraically closed field $k$. 
Let $n_A:A\longrightarrow A$ be a multiplication by $n$ map and $A_n:=\text{Ker }n_A$. 
If char $k=0$, then we have $A_n \simeq (\frac{\mathbb{Z}}{n\mathbb{Z}})^{2g}$ as a group 
scheme. If char $k=p>0$, then we have (see \cite{MUM}) the following:
\begin{enumerate}
\item $A_n \simeq (\frac{\mathbb{Z}}{n\mathbb{Z}})^{2g}$ when $n$ is relatively prime to $p$;
\item $A_{p^m} \simeq A_{p^m}^{\scr} \times \hat{A}_{p^m}^{\scr} \times A_{p^m}^0$, where $A_{p^m}^{\scr}$ 
denote the reduced part of $A_{p^m}$, $\hat{A}_{p^m}^{\scr}$ denote Cartier dual of $A_{p^m}^{\scr}$ 
and $A_{p^m}^0$ is local-local part of $A_{p^m}$ for 
all $m\in \mathbb{N}$. 
Note that $A_{p^m}^{\scr}\simeq (\frac{\mathbb{Z}}{p^m\mathbb{Z}})^{r}$ and 
$\hat{A}_{p^m}^{\scr} \simeq (\mu_{p^m})^r$,  where $0\leq r \leq g$ is called the $p$-rank of $A$;
\item $A_{tp^m} \simeq A_t \times A_{p^m}$ when $t$ is relatively prime to $p$.
\end{enumerate}

The inverse limit $\varprojlim A^{\scr}_{p^n}$ is called the \textit{$p$-adic discrete Tate group} 
$T_p^d(A)$ and, the inverse limit $\varprojlim A^0_{p^n}$ is called 
\textit{$p$-adic local--local Tate group} $T_p^0(A)$.

\section{Structure theorem}
For a variety $X$ defined over a field of positive characteristic, we denote by $F_X \colon X\rightarrow X$
the Frobenius morphism. Recall that a vector bundle $V$ on $X$ is called $F$-trivial if there is a positive integer $n$ 
such that $(F_X^n)^*V\simeq \mathcal{O}_X^r$.

\subsection{The abelian part of the $\pi^{\mathrm{EN}}(X,x)$}
We first give the structure theorem for the abelian part of the group scheme $\pi^{\rm EN}(X, x)$.

\begin{lemma}\label{FL}
Let $X$ be a smooth projective variety over the algebraically closed field $k$ with $char\:k=p>0$. 
Then, every essentially finite line bundle over $X$ is a torsion (finite) line bundle.
\end{lemma}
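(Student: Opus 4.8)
The plan is to reduce the statement to the known structure of essentially finite bundles and the special features of line bundles in characteristic $p$. First I would recall that an essentially finite line bundle $L$ is, by definition, a Nori-semistable subquotient of a finite vector bundle; equivalently, $L$ lies in the Tannakian category $\mathcal{C}^{\mathrm{N}}(X)$ and therefore corresponds to a $1$-dimensional representation of $\pi^N(X,x)$. A $1$-dimensional representation of any affine group scheme factors through its largest diagonalizable (in fact multiplicative-type) quotient, so $L$ is classified by a character of $\pi^N(X,x)^{\mathrm{ab}}$, and in particular by a character of the multiplicative-type part of the abelianization. The key point will then be that this forces $L$ to be a torsion point of $\mathrm{Pic}(X)$.

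The main step is to pin down which line bundles are essentially finite. Being Nori-semistable, $L$ has degree zero on every curve, so $L \in \mathrm{Pic}^\tau X$; concretely $L$ defines a point of $(\mathrm{Pic}^\tau X)_{\mathrm{red}}$. Now I would argue that an essentially finite line bundle is in fact \emph{finite}: the abelian category generated by $L$ inside $\mathcal{C}^{\mathrm{N}}(X)$ consists of finite bundles, because $L$ being a subquotient of a finite bundle, together with rank one, means the tensor powers $L^{\otimes n}$ cannot produce infinitely many non-isomorphic indecomposables unless... — and here is precisely the mechanism: if $L$ had infinite order in $\mathrm{Pic}(X)$, then $\{L^{\otimes n}\}_{n \in \mathbb{Z}}$ would be an infinite set of pairwise non-isomorphic line bundles, all of which would have to be essentially finite (the category is closed under tensor products and duals), and each would be a subquotient of a \emph{fixed} finite bundle $W$ with $f(W) \cong g(W)$ only if the Tannakian subcategory generated by $L$ is finite — contradiction, since a finite-dimensional Tannakian category can contain only finitely many isomorphism classes of rank-one objects. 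Hence $L^{\otimes N} \cong \mathcal{O}_X$ for some $N \geq 1$.

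A cleaner way to package the last paragraph, which I would actually use in the write-up, is via the structure theorem for $\pi^S_{\mathrm{ab}}$ (Theorem 5.9 of \cite{La12}): the essentially finite line bundle $L$ is numerically flat, hence corresponds to a character of $\pi^S_{\mathrm{ab}}(X,x) \simeq \varprojlim \hat G \times \mathrm{Diag}\bigl((\mathrm{Pic}^\tau X)_{\mathrm{red}}\bigr)$; since $\varprojlim \hat G$ is the unipotent (local) abelian part and a line bundle on which Frobenius acts nontrivially cannot be essentially finite, $L$ factors through the \'etale quotient $\mathrm{Diag}\bigl((\mathrm{Pic}^\tau X)_{\mathrm{red}}\bigr)$, i.e. $L$ corresponds to an element of $(\mathrm{Pic}^\tau X)_{\mathrm{red}}(k)$. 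But essential finiteness means the corresponding representation of $\pi^S$ factors through $\pi^N$, whose abelian multiplicative part is $\mathrm{Diag}(\mathrm{NS}(X)_{\mathrm{tor}}) \times (\text{pro-finite \'etale})$ — in any case a profinite group scheme — so the character has finite order, giving $L^{\otimes N} \cong \mathcal{O}_X$.

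The step I expect to be the main obstacle is justifying that an essentially finite line bundle cannot involve the local (Frobenius-unipotent) direction of $\mathrm{Pic}^0 X$: one must show that if $(F_X^n)^* L \not\cong \mathcal{O}_X$ for all $n$ yet $L$ is still in the subcategory $\varprojlim\hat G$ it cannot be a subquotient of a \emph{finite} bundle. The resolution is that finite bundles over $X$ become trivial after pullback by some Nori-covering (a torsor under a finite group scheme), and for a line bundle this covering can be taken abelian; a finite-order-on-Frobenius condition on $L$ is then equivalent to $L$ being pulled back from a finite quotient, which is exactly torsion in $\mathrm{Pic}$. I would spell this out by noting that the essentially finite $L$, being rank one, is itself finite in characteristic $p$ precisely when it is trivialized by an fppf torsor under a finite group scheme $G$; rank-one trivialized representations of a finite group scheme are classified by $\mathrm{Hom}(G,\mathbb{G}_m) = \hat G(k)$, a finite group, so $L$ has finite order in $\mathrm{Pic}(X)$, completing the proof.
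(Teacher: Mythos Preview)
Your proposal eventually arrives at a correct argument, but it is buried under two false starts and some confused statements. The sound kernel is the last sentence: an essentially finite bundle lies in the Tannakian subcategory $\langle W\rangle$ generated by some finite bundle $W$, and Nori's theory identifies $\langle W\rangle$ with $\mathrm{Rep}(G)$ for a \emph{finite} group scheme $G$; a rank-one object is then a character $G\to\mathbb{G}_m$, and $\widehat{G}(k)$ is a finite group, so $L$ has finite order in $\mathrm{Pic}(X)$. (Your second paragraph gestures at the same idea, once ``finite-dimensional Tannakian category'' is read as ``Tannakian category with finite Tannaka group''.) By contrast, your third paragraph should be dropped: the sentence ``a line bundle on which Frobenius acts nontrivially cannot be essentially finite'' is false (indeed $F_X^*L\simeq L^{\otimes p}$, so Frobenius acts nontrivially on any nontrivial $L$), and invoking the diagonal part of $\pi^N_{\mathrm{ab}}$ is circular here, since in this paper Lemma~\ref{FL} is precisely what is used to identify that diagonal part in Theorem~\ref{ABEN}.

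The paper takes a different and much shorter route. It invokes the Biswas--Holla characterization \cite[Theorem 3.5]{BH07}: in characteristic $p$, any essentially finite bundle becomes $F$-trivial after pullback along some connected \'etale Galois cover $\gamma:\hat X\to X$. For a line bundle this says $\gamma^*(L^{\otimes p^n})\simeq (F_{\hat X}^n)^*\gamma^*L\simeq\mathcal{O}_{\hat X}$ for some $n$, using $F_X^*L\simeq L^{\otimes p}$ and the compatibility $\gamma\circ F_{\hat X}=F_X\circ\gamma$; hence $L^{\otimes p^n}$ is \'etale-trivializable and therefore torsion. Your Tannakian argument has the virtue of being characteristic-free and of using nothing beyond Nori's original construction, whereas the paper's argument is specific to characteristic $p$ but is a clean three-line computation once \cite{BH07} is quoted. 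Either is acceptable; if you keep yours, strip it down to the single paragraph with the finite-group-scheme character argument and discard the detours.
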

\begin{proof}
Let $L$ be an essentially finite line bundle on $X$. By \cite[Theorem 3.5]{BH07}, there is a connected \'{e}tale Galois covering $\gamma: \hat{X}\longrightarrow X$ such that $\gamma^*L$ is 
an $F$-trivial vector bundle over $\hat{X}$, i.e., there exist $n\in \mathbb{N}$ such that 
$(F_{\hat{X}}^n)^*\gamma^*L\simeq \mathcal{O}_{\hat{X}}$. 
Since $\gamma \circ F_{\hat{X}}^n = F_{X}^n \circ \gamma$, we have
$$
(F_{\hat{X}}^n)^*\gamma^*L \simeq \gamma^*(F_{X}^n)^*L \simeq 
\gamma^*(L^{\otimes p^n}) \simeq \mathcal{O}_{\hat{X}}
$$
Therefore, $L^{\otimes p^n}$ is \'{e}tale trivializable, and hence it is a finite line bundle. 
\end{proof}

Let $\pi_{\rm ab}^{\mathrm{EN}}(X,x)$ be the largest abelian quotient of $\pi^{\mathrm{EN}}(X,x)$ which is called 
\textit{abelian part} of $\pi^{\mathrm{EN}}(X,x)$.

Let $\mathbb{F}_\tau$ denote the group of torsion (finite) line bundles on $X$. Clearly, 
$\mathbb{F}_\tau$ is a subgroup of $\mathrm{Pic}^\tau\:X$.

\begin{theorem}\label{ABEN}
Let $X$ be a smooth projective variety defined over an algebraically closed field $k$. If 
$char\:k=0$ then we have an isomorphism
$$\pi_{\rm ab}^{\mathrm{EN}}(X,x)\simeq H^1(X,\mathcal{O}_X)^* \times \mathrm{Diag}(\mathbb{F}_\tau).$$
If $char\:k=p>0$ then we have 
$$\pi_{\rm ab}^{\mathrm{EN}}(X,x)\simeq \varprojlim \hat{G} \times \mathrm{Diag}(\mathbb{F}_\tau),$$
where $\hat{G}$ denotes the Cartier dual of $G$ and the inverse limit is taken over all finite 
local group subschemes $G$ of $\mathrm{Pic}^0\:X$.
\end{theorem}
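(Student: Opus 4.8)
The plan is to mimic the structure of the proof of Nori's computation of $\pi^{\rm uni}_{\rm ab}$ and Langer's computation of $\pi^S_{\rm ab}$ (Theorem stated above as \cite[Theorem 5.9]{La12}), splitting the abelianization of $\pi^{\mathrm{EN}}(X,x)$ into a ``unipotent part'' and a ``multiplicative (diagonalizable) part''. The abelian part $\pi^{\mathrm{EN}}_{\rm ab}(X,x)$ corresponds, under Tannaka duality, to the full Tannakian subcategory of $\mathcal{C}^{\mathrm{EN}}(X)$ generated by those semi--essentially finite bundles whose monodromy group is abelian; concretely these are the bundles that admit a filtration with successive quotients \emph{rank-one} essentially finite bundles (the indecomposable essentially finite bundles with abelian monodromy are exactly the essentially finite line bundles). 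So the first step is to identify the abelian part with the Tannakian category $\mathcal{D}$ whose objects are iterated extensions of essentially finite line bundles, and to note, using Lemma~\ref{FL} (in char $p$) and the char-zero fact that essentially finite line bundles are already finite hence torsion, that every essentially finite line bundle on $X$ lies in $\mathbb{F}_\tau \subseteq \mathrm{Pic}^\tau X$.

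Next I would show that the diagonalizable quotient of $\pi(\mathcal{D}) = \pi^{\mathrm{EN}}_{\rm ab}(X,x)$ is $\mathrm{Diag}(\mathbb{F}_\tau)$. The rank-one objects of $\mathcal{D}$ are precisely the elements of $\mathbb{F}_\tau$, with tensor product as group law; the Tannakian subcategory they generate (i.e.\ the subcategory of $\mathcal{D}$ whose objects are direct sums of line bundles in $\mathbb{F}_\tau$) is equivalent to $\mathrm{Rep}_k(\mathrm{Diag}(\mathbb{F}_\tau))$ — this is the standard dictionary between diagonalizable group schemes and abelian groups recalled in the Preliminaries. Passing to the ``unipotent part'': the kernel of $\pi^{\mathrm{EN}}_{\rm ab}(X,x) \to \mathrm{Diag}(\mathbb{F}_\tau)$ is pro-unipotent and is Tannaka dual to the category of bundles in $\mathcal{D}$ all of whose essentially finite line-bundle subquotients are trivial, i.e.\ iterated self-extensions of $\mathcal{O}_X$, which is exactly $\mathcal{C}^{\rm uni}(X)$. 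Hence the unipotent part of $\pi^{\mathrm{EN}}_{\rm ab}(X,x)$ is $\pi^{\rm uni}_{\rm ab}(X,x)$, which by the Proposition of Nori recalled above is $H^1(X,\mathcal{O}_X)^*$ in characteristic zero and $\varprojlim \hat{G}$ (over finite local $G \le \mathrm{Pic}^0 X$) in positive characteristic.

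It then remains to show that the extension
$$
1 \to \pi^{\rm uni}_{\rm ab}(X,x) \to \pi^{\mathrm{EN}}_{\rm ab}(X,x) \to \mathrm{Diag}(\mathbb{F}_\tau) \to 1
$$
splits, giving a direct product. For this I would argue exactly as in Langer's proof: the multiplicative part $\mathrm{Diag}(\mathbb{F}_\tau)$ is of multiplicative type, the unipotent part is pro-unipotent, and over a perfect field an extension of a group of multiplicative type by a (pro-)unipotent group splits, and moreover an abelian affine group scheme over a perfect field is canonically the product of its multiplicative part and its unipotent part (the analogue, for affine group schemes, of the fact that a commutative algebraic group is an extension, here canonically split in the affine pro-finite-type setting, of a diagonalizable group by a unipotent one). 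This gives the claimed product decomposition. I expect the main obstacle to be the careful bookkeeping in the first step: namely proving that the abelian part of $\pi^{\mathrm{EN}}$ is captured exactly by iterated extensions of essentially finite \emph{line} bundles, and that no ``new'' rank-one objects appear in $\mathcal{C}^{\mathrm{EN}}(X)$ beyond $\mathbb{F}_\tau$ — for the latter one uses that a rank-one essentially finite bundle is finite (char $0$) or becomes $F$-trivial after an étale cover and hence is torsion (char $p$, via Lemma~\ref{FL}), so its monodromy is finite abelian, landing in $\mathrm{Diag}(\mathbb{F}_\tau)$. The char-$p$ subtlety that $\mathbb{F}_\tau$ may differ from $(\mathrm{Pic}^\tau X)_{\rm red}$ (so that we genuinely get $\mathrm{Diag}(\mathbb{F}_\tau)$ rather than Langer's $\mathrm{Diag}((\mathrm{Pic}^\tau X)_{\rm red})$) is precisely the point where the extended Nori group is smaller than the $S$-fundamental group, and this is where Lemma~\ref{FL} does its work.
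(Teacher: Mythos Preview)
Your proposal is correct and follows essentially the same strategy as the paper: identify the unipotent part of $\pi^{\mathrm{EN}}_{\rm ab}(X,x)$ with $\pi^{\rm uni}_{\rm ab}(X,x)$ (via $\mathcal{C}^{\rm uni}(X)\subset\mathcal{C}^{\mathrm{EN}}(X)$ and Nori's computation), identify the diagonal part with $\mathrm{Diag}(\mathbb{F}_\tau)$ by showing that the characters of $\pi^{\mathrm{EN}}(X,x)$ are exactly the torsion line bundles (using Lemma~\ref{FL} in positive characteristic), and then invoke the canonical splitting of a commutative affine group scheme over a perfect field into unipotent and diagonalizable factors. The paper's proof is somewhat more direct in that it does not introduce or analyze the intermediate category $\mathcal{D}$ of iterated extensions of essentially finite line bundles, but simply computes the largest unipotent quotient and the character group of $\pi^{\mathrm{EN}}(X,x)$ and then cites \cite[9.5]{W} for the product decomposition; your extra bookkeeping is harmless but unnecessary.
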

\begin{proof}
Let $\pi^{\mathrm{EN}}_{\rm{uni}}(X,x)$ be the largest unipotent quotient of $\pi^{\mathrm{EN}}(X,x)$. By the 
standard Tannakian consideration, the category of finite dimensional representation of 
$\pi^{\mathrm{EN}}_{\rm{uni}}(X,x)$ is equivalent to the full subcategory of $\mathcal{C}^{\mathrm{EN}}(X)$ 
whose objects have a filtration with all successive quotients are isomorphic to $\mathcal{O}_X$. 
Since $\mathcal{C}^{\rm{uni}}(X)\subset \mathcal{C}^{\mathrm{EN}}(X)$, we have 
$\pi^{\mathrm{EN}}_{\rm{uni}}(X,x)\simeq \pi^{\rm{uni}}(X,x)$. Therefore, the largest unipotent quotient 
of $\pi_{\rm ab}^{\mathrm{EN}}(X,x)$ which is called \textit{unipotent part} of $\pi_{\rm ab}^{\mathrm{EN}}(X,x)$ 
is isomorphic to the abelian part of $\pi^{\rm{uni}}(X,x)$.

If $\mathrm{char}\, k = 0$, then by \cite[Chapter IV, Proposition 2]{No}, the abelian 
part of $\pi^{\rm{uni}}(X,x)$ is $H^1(X,\mathcal{O}_X)^*$. If $\mathrm{char}\, k = p > 0$, 
then by \cite[Ch. IV, Proposition 6]{No}, the abelian part of $\pi^{\rm{uni}}(X,x)$ is the 
inverse limit of Cartier duals of finite local group subscheme of $\mathrm{Pic}^0\:X$.

Since the homomorphism $\pi^{\mathrm{EN}}(X,x) \longrightarrow \mathbb{G}_m$ is one dimensional 
representation of $\pi^{\mathrm{EN}}(X,x)$ and by Tannakian duality, it gives line bundle in 
$\mathcal{C}^{\mathrm{EN}}(X)$. So, the group of characters of $\pi^{\mathrm{EN}}(X,x)$ is isomorphic to 
the group of line bundles in $\mathcal{C}^{\mathrm{EN}}(X)$. By the definition of semi--essentially finite bundle 
and by Lemma \ref{FL}, we conclude that the character group of $\pi^{\mathrm{EN}}(X,x)$ is the group 
of torsion line bundles $\mathbb{F}_\tau$.
By \cite[9.5]{W}, $\pi_{\rm ab}^{\mathrm{EN}}(X,x)$ is a product of unipotent and diagonal part, and
hence the result follows.
\end{proof} 

\begin{theorem}\label{AVST}
Let $A$ be an abelian variety defined over an algebraically closed field $k$ of characteristic 
zero. Then, we have
$$\pi^{\mathrm{EN}}(A,0)\simeq \pi^{N}(A,0)\times \pi^{\rm{uni}}(A,0).$$
\end{theorem}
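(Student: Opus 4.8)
The plan is to describe the Tannakian category $\mathcal{C}^{\mathrm{EN}}(A)$ explicitly and recognise it as the external tensor product $\mathcal{C}^{\rm N}(A)\boxtimes\mathcal{C}^{\rm uni}(A)$, whose Tannaka dual is $\pi^{\rm N}(A,0)\times\pi^{\rm uni}(A,0)$.

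First I would pin down the indecomposable essentially finite bundles on $A$. Since $\operatorname{char}k=0$, essentially finite is the same as finite, and every finite bundle on $A$ is trivialised by an isogeny; as the multiplication maps $n_A$ are cofinal among isogenies of $A$ (for any isogeny $f\colon B\to A$ there is $g$ with $g\circ f=n_B$ and $f\circ g=n_A$), every finite bundle is trivialised by some $n_A$ and hence corresponds to a finite-dimensional representation of the finite abelian group $A_n\cong(\mathbb{Z}/n\mathbb{Z})^{2g}$. Over $k$ of characteristic zero such representations are semisimple with one-dimensional indecomposables, so the indecomposable essentially finite bundles on $A$ are exactly the torsion line bundles, $\mathcal{C}^{\rm N}(A)$ is the (semisimple) subcategory of $\mathcal{C}^{\mathrm{EN}}(A)$ of finite direct sums of elements of $\mathbb{F}_\tau$, and $\mathcal{C}^{\mathrm{EN}}(A)$ is the extension closure of $\mathbb{F}_\tau$ inside the category of bundles on $A$.

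Next comes the heart of the argument, a block decomposition. For distinct $L,L'\in\mathbb{F}_\tau$ the line bundle $L^{-1}\otimes L'$ is a non-trivial point of $\operatorname{Pic}^0A$, so $\operatorname{Ext}^i_{\mathcal{O}_A}(L,L')=H^i(A,L^{-1}\otimes L')=0$ for all $i\geq0$ by the classical vanishing of cohomology of non-trivial degree-zero line bundles on an abelian variety \cite{MUM}. For $L\in\mathbb{F}_\tau$ let $\mathcal{C}_L\subseteq\mathcal{C}^{\mathrm{EN}}(A)$ be the full subcategory of bundles admitting a filtration with all successive quotients isomorphic to $L$; a dévissage from the above vanishing gives $\operatorname{Hom}(V,W)=\operatorname{Ext}^1(V,W)=0$ whenever $V\in\mathcal{C}_L$, $W\in\mathcal{C}_{L'}$ with $L\neq L'$. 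Arguing by induction on the length of a defining filtration, every $V\in\mathcal{C}^{\mathrm{EN}}(A)$ then splits, functorially and uniquely, as $V\cong\bigoplus_{L\in\mathbb{F}_\tau}V_L$ with $V_L\in\mathcal{C}_L$ and almost all $V_L=0$; in particular $\mathcal{C}_{\mathcal{O}_A}=\mathcal{C}^{\rm uni}(A)$ is a Tannakian subcategory closed under subquotients in $\mathcal{C}^{\mathrm{EN}}(A)$, and tensoring by $L$ is an exact tensor equivalence $\mathcal{C}^{\rm uni}(A)\xrightarrow{\ \sim\ }\mathcal{C}_L$, so $V_L\cong L\otimes U_L$ with $U_L$ unipotent.

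Finally I would conclude by Tannakian duality. The inclusions of $\mathcal{C}^{\rm N}(A)$ and $\mathcal{C}^{\rm uni}(A)$ into $\mathcal{C}^{\mathrm{EN}}(A)$ give a homomorphism $\Phi\colon\pi^{\mathrm{EN}}(A,0)\to\pi^{\rm N}(A,0)\times\pi^{\rm uni}(A,0)$ whose dual is the tensor functor $\mathcal{C}^{\rm N}(A)\boxtimes\mathcal{C}^{\rm uni}(A)\to\mathcal{C}^{\mathrm{EN}}(A)$, $W\boxtimes U\mapsto W\otimes U$. Since $\mathcal{C}^{\rm N}(A)$ is semisimple with simple objects $\mathbb{F}_\tau$, every object of $\mathcal{C}^{\rm N}(A)\boxtimes\mathcal{C}^{\rm uni}(A)$ has the form $\bigoplus_{L}L\boxtimes U_L$, and the decomposition of the previous step says exactly that this functor is essentially surjective; the orthogonality of the blocks $\mathcal{C}_L$ together with $\operatorname{Hom}_{\boxtimes}(W\boxtimes U,W'\boxtimes U')=\operatorname{Hom}(W,W')\otimes\operatorname{Hom}(U,U')$ makes it fully faithful, and it is manifestly compatible with the fibre functors at $0$. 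Hence it is a tensor equivalence, so $\Phi$ is an isomorphism. The one genuinely substantial ingredient is the block decomposition in the previous paragraph, which rests entirely on the vanishing $H^\ast(A,M)=0$ for $0\neq M\in\operatorname{Pic}^0A$; everything else is the cofinality of $\{n_A\}$ among isogenies and formal Tannakian bookkeeping.
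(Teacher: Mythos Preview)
Your proof is correct but follows a genuinely different route from the paper's. The paper proceeds structurally on the group-scheme side: it first establishes a general decomposition $\pi^{\mathrm{EN}}_{\rm ab}(X,x)\simeq H^1(X,\mathcal{O}_X)^*\times\mathrm{Diag}(\mathbb{F}_\tau)$ for any smooth projective $X$ in characteristic zero (Theorem~\ref{ABEN}, via the unipotent/diagonalizable splitting of commutative affine group schemes \cite[9.5]{W}), then invokes Langer's result \cite[Theorem~6.1]{La12} to know that $\pi^{\mathrm{EN}}(A,0)$, being a quotient of $\pi^S(A,0)$, is already abelian, and finally computes $\mathrm{Diag}(\mathbb{F}_\tau)\simeq\varprojlim A_n\simeq\pi^N(A,0)$ by writing $\mathbb{F}_\tau$ as the direct limit of the $n$-torsion of $\mathrm{Pic}^0 A\simeq A^\vee$. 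You instead work entirely on the categorical side, obtaining a block decomposition of $\mathcal{C}^{\mathrm{EN}}(A)$ directly from Mumford's vanishing $H^\ast(A,M)=0$ for $0\neq M\in\mathrm{Pic}^0 A$; this is closer in spirit to Otabe's original argument for elliptic curves \cite{Ot} and is more self-contained, since it avoids both Langer's theorem and the abstract structure theory of commutative group schemes. Your approach has the bonus of exhibiting every semi-finite bundle on $A$ explicitly as $\bigoplus_L L\otimes U_L$ with $L$ torsion and $U_L$ unipotent (a sharpening of Remark~\ref{R3}), whereas the paper's approach has the advantage that its intermediate Theorem~\ref{ABEN} is stated for arbitrary smooth projective varieties and is reused later in the paper.
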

\begin{proof}
From the Theorem \ref{ABEN} and \cite[Theorem 6.1]{La12}, the unipotent part of $\pi^{\mathrm{EN}}(A,0)$ 
is $H^1(X,\mathcal{O}_X)^*$.

Since $\mathbb{F}_\tau=\{L\in \mathrm{Pic}^0\:A: L^{\otimes n}\simeq 
\mathcal{O}_A \text{ for some } n\in \mathbb{N}\}$, we can write $\mathbb{F}_\tau$ as a direct 
limit of $U_i:=\{L\in \mathrm{Pic}^0\:A: L^{\otimes i}\simeq \mathcal{O}_A\}$. 
$$\mathbb{F}_\tau = \varinjlim U_i$$
$$k[\mathbb{F}_\tau] = \varinjlim k[U_i]$$
$$\mathrm{Diag}(\mathbb{F}_\tau)= \text{Spec }k[\mathbb{F}_\tau] = \varprojlim \text{Spec }k[U_i]$$

Note that $U_i$ is an $i$-torsion component of $\mathrm{Pic}^0\:A$ and by \cite[\S 13]{MUM}, 
$\mathrm{Pic}^0\:A \simeq A^\vee$, where $A^\vee$ is a dual abelian variety of $A$, i.e., $U_i$ 
is an abstract group $A^\vee_i$ which is isomorphic to $(\frac{\mathbb{Z}}{i\mathbb{Z}})^{2g}$ 
as an abstract group. So, we have the following isomorphism as a group scheme. 
$$\text{Spec }k[U_i] \simeq (\mu_i)^{2g} \simeq (\frac{\mathbb{Z}}{i\mathbb{Z}})^{2g} \simeq A_i$$
$$\mathrm{Diag}(\mathbb{F}_\tau) = \varprojlim \text{Spec }k[U_i] = \varprojlim A_i.$$
Since $\pi^N(A, 0) \simeq \varprojlim A_n$ \cite[Remark 3]{No83}, we conclude the theorem.
\end{proof}

\begin{remark}\label{R1}
\rm If char $k = 0$, then by applying the same method of Theorem \ref{ABEN} for the Nori 
fundamental group scheme, and by using \cite[Proposition 2.10(1)]{Ot}, we have 
$\pi^N_{\rm ab}(X,x)\simeq \mathrm{Diag}(\mathbb{F}_\tau)$. Hence, 
$\pi^{\mathrm{EN}}_{\rm ab}(X, x) \simeq \pi^{\rm uni}_{\rm ab}(X,x)\times \pi^N_{\rm ab}(X,x)$. 
For an abelian variety $A$, the group schemes $\pi^{\mathrm{EN}}(A,0), \pi^N(A,0)$, and $\pi^{\rm uni}(A,0)$ 
being abelian, we can also conclude the Theorem \ref{AVST}.
\end{remark}

\begin{remark}\label{R2}
\rm If char $k = p > 0$, then we have $\mathcal{C}^{\rm uni}(X)\subset \mathcal{C}^N(X)$ 
\cite[Chapter IV, Proposition 3]{No}. Hence, the unipotent part of $\pi^N_{\rm ab}(X,x)$ 
is isomorphic to $\pi^{\rm uni}_{\rm ab}(X,x)$. Also, the diagonal part of $\pi^N_{\rm ab}(X,x)$ 
is isomorphic to $\mathrm{Diag}(\mathbb{F}_\tau)$. Hence, 
$\pi^{\mathrm{EN}}_{\rm ab}(X,x) \simeq \pi^N_{\rm ab}(X,x)$. For an abelian variety $A$, we get
$\pi^{\mathrm{EN}}(A,0)\simeq \pi^{N}(A,0)$. We can give more explicit proof of this as follows: 
\end{remark}

\begin{proposition}\label{ABSTP}
Let $A$ be an abelian variety defined over an algebraically closed field $k$ of positive 
characteristic, then we have 
$$\pi^{\mathrm{EN}}(A,0)\simeq \pi^{N}(A,0).$$
\end{proposition}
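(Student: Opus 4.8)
The plan is to prove the sharper statement that $\mathcal{C}^{N}(A)=\mathcal{C}^{\mathrm{EN}}(A)$ as full subcategories of the category of vector bundles on $A$; the asserted isomorphism then follows from Tannakian duality, the inclusion functor being compatible with the fibre functors at $0$. One inclusion is formal: an essentially finite bundle is a finite direct sum of essentially finite indecomposable bundles, and the filtration by partial direct sums displays it as a semi--essentially finite bundle, whence $\mathcal{C}^{N}(A)\subseteq\mathcal{C}^{\mathrm{EN}}(A)$. So the task is the reverse inclusion, i.e. to show that \emph{every semi--essentially finite bundle on $A$ is essentially finite}.

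The principal input is Nori's structure theorem $\pi^{N}(A,0)\simeq\varprojlim_{n}A_{n}$ from \cite[Remark 3]{No83}, the limit being over multiplication maps $n_{A}\colon A\to A$. Since this is a projective limit of finite group schemes with faithfully flat transition morphisms, $\mathcal{C}^{N}(A)=\varinjlim_{n}\mathrm{Rep}(A_{n})$, so every essentially finite bundle $E$ on $A$ factors through some $A_{n}$; as $n_{A}$ is a torsor under $A_{n}$, this means $n_{A}^{*}E\simeq\mathcal{O}_{A}^{\,\operatorname{rk}E}$. Conversely, I claim that if a bundle $V$ on $A$ satisfies $m_{A}^{*}V\simeq\mathcal{O}_{A}^{\,r}$ for some $m$, then $V$ is essentially finite. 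First, $V$ is numerically flat --- equivalently Nori--semistable --- because numerical flatness is preserved and reflected by the finite surjective morphism $m_{A}$. Second, the unit of adjunction yields an inclusion $V\hookrightarrow(m_{A})_{*}m_{A}^{*}V\simeq\bigl((m_{A})_{*}\mathcal{O}_{A}\bigr)^{\,r}$, and $(m_{A})_{*}\mathcal{O}_{A}$ is the bundle associated with the regular representation $R$ of $A_{m}$, which is \emph{finite}: the untwisting isomorphism $R\otimes R\simeq R^{\oplus\operatorname{rk}R}$ gives a nontrivial polynomial relation. Hence $V$ is a numerically flat subsheaf of a finite bundle, so $V$ is essentially finite.

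With these two observations in hand, the argument is as follows. Let $V$ be semi--essentially finite, with filtration $V=V_{0}\supset V_{1}\supset\cdots\supset V_{\ell}=0$ whose successive quotients $Q_{i}=V_{i}/V_{i+1}$ are essentially finite and indecomposable. Choose $n_{i}$ with $(n_{i})_{A}^{*}Q_{i}$ trivial and set $N=n_{1}\cdots n_{\ell}$; since $n_{i}\mid N$, each $N_{A}^{*}Q_{i}$ is trivial. Pulling back the filtration, $N_{A}^{*}V$ has trivial successive quotients, and refining it we see that $N_{A}^{*}V$ is a unipotent bundle on $A$. Now the hypothesis $\mathrm{char}\,k=p>0$ enters: by \cite[Chapter IV, Proposition 3]{No} every unipotent bundle is essentially finite, so $N_{A}^{*}V\in\mathcal{C}^{N}(A)$. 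By the first observation there is an $m$ with $(mN)_{A}^{*}V=m_{A}^{*}(N_{A}^{*}V)$ trivial, and then the second observation, applied with the isogeny $(mN)_{A}$, shows that $V$ is essentially finite. This proves $\mathcal{C}^{\mathrm{EN}}(A)\subseteq\mathcal{C}^{N}(A)$, hence the proposition; in particular it re--proves that $\pi^{\mathrm{EN}}(A,0)$ is abelian.

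The step I expect to be the main obstacle is the descent of essential finiteness along an isogeny: one has to be sure that Nori's category on $A$ is genuinely exhausted by the categories $\mathrm{Rep}(A_{n})$ (this is precisely \cite[Remark 3]{No83}), that $(m_{A})_{*}\mathcal{O}_{A}$ is a finite bundle, and that the embedding $V\hookrightarrow(m_{A})_{*}m_{A}^{*}V$ realises $V$ as a subobject in the abelian category of numerically flat bundles and not merely as a subsheaf. The rest --- refining a filtration with trivial quotients into a unipotent one, and composing multiplication maps --- is routine. Note that one cannot shortcut this by comparing only abelianizations as in Remark \ref{R2}, since a priori $\pi^{\mathrm{EN}}(A,0)$ need not be abelian; the descent above is exactly what upgrades the comparison to the full group schemes.
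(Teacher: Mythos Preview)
Your argument is correct and proves the stronger statement $\mathcal{C}^{N}(A)=\mathcal{C}^{\mathrm{EN}}(A)$, but it is \emph{not} the paper's proof. The paper argues via the abelianization: by Theorem~\ref{ABEN} one has $\pi^{\mathrm{EN}}_{\mathrm{ab}}(A,0)\simeq \varprojlim \hat{G}\times\mathrm{Diag}(\mathbb{F}_\tau)$, and by \cite[Theorem~6.1]{La12} the unipotent factor is $T_p^0(A)\times T_p^d(A)$; a direct computation of $\mathrm{Diag}(\mathbb{F}_\tau)$ in terms of the groups $U_i={}_i(A^\vee)$ then reassembles the product as $\varprojlim A_n\simeq\pi^N(A,0)$. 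The passage from $\pi^{\mathrm{EN}}_{\mathrm{ab}}$ to $\pi^{\mathrm{EN}}$ is licensed by the fact that $\pi^{\mathrm{EN}}(A,0)$ is a quotient of $\pi^{S}(A,0)$, which is abelian by \cite[Theorem~6.1]{La12}.

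This last point is where your closing remark goes astray: you write that ``one cannot shortcut this by comparing only abelianizations as in Remark~\ref{R2}, since a priori $\pi^{\mathrm{EN}}(A,0)$ need not be abelian''. In fact one can, and the paper does --- commutativity of $\pi^{\mathrm{EN}}(A,0)$ is not a priori but is a consequence of Langer's structure theorem for $\pi^S$ of an abelian variety. What your approach buys is independence from that result: your isogeny-trivialization argument (trivialize the graded pieces by $N_A$, use \cite[Ch.~IV, Prop.~3]{No} to make $N_A^*V$ essentially finite, trivialize again, then descend via $V\hookrightarrow (mN)_{A*}\mathcal{O}_A^{\,r}$) uses only \cite[Remark~3]{No83} and elementary facts about numerically flat bundles, and recovers commutativity of $\pi^{\mathrm{EN}}(A,0)$ as a corollary rather than an input. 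Conversely, the paper's route gives the explicit product decomposition in one stroke and fits into the general framework of Theorem~\ref{ABEN}, at the cost of invoking \cite{La12}.
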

\begin{proof}
From the Theorem \ref{ABEN} and \cite[Theorem 6.1]{La12}, the unipotent part of $\pi^{\mathrm{EN}}(A,0)$ 
is $T_p^0(A)\times T_p^d(A)$. 

In this case, we have $U_i\simeq (\frac{\mathbb{Z}}{i\mathbb{Z}})^{2g}$, when $i$ is relatively 
prime to $p$. For any $m\in \mathbb{N}$, we have $U_{p^m}\simeq (\frac{\mathbb{Z}}{p^m\mathbb{Z}})^{r}$; 
where $0\leq r \leq g$, and $U_{tp^m}\simeq U_i\times U_{p^m}$ when $t$ is relatively prime to $p$, 
as a group. Hence, we get 
$$\mathrm{Diag}(\mathbb{F}_\tau) \simeq \varprojlim \mathcal{H}_i\,,$$
where $\mathcal{H}_i$ is $(\mu_i)^{2g}$ when $i$ is relatively prime to $p$, for any $m\in \mathbb{N}$,
we have $\mathcal{H}_{p^m}=(\mu_{p^m})^{r}$; where $0\leq r \leq g$, and $\mathcal{H}_{tp^m}= 
\mathcal{H}_t\times \mathcal{H}_{p^m}$ whenever $t$ is not divisible by $p$. Therefore,
$$\pi^{\mathrm{EN}}(A,0) \simeq T_p^0(A)\times T_p^d(A) \times \varprojlim \mathcal{H}_i$$
$$\pi^{\mathrm{EN}}(A,0) \simeq \varprojlim A^0_{p^n}\times \varprojlim A^r_{p^n} \times 
\varprojlim \mathcal{H}_i \simeq \varprojlim A_n$$

The last isomorphism follows from the decomposition of $A_n$ and some basic property of an inverse 
limit. Since $\pi^N(A, 0) \simeq \varprojlim A_n$ \cite[Remark 3]{No83}, we conclude the theorem.
\end{proof}

\begin{remark}\label{R3}
\rm If $k$ has characteristic zero, then by Theorem \ref{AVST}, it follows that for any 
semi--finite bundle $E$ on $A$, there is a unipotent bundle $U$ and a finite bundle $F$ on $A$ 
such that $E\simeq U\otimes F$. If $k$ has a positive characteristic, then by Proposition \ref{ABSTP}, 
it follows that every semi--essentially finite bundle over an abelian variety $A$ is essentially finite.
This leads to raising the following:

\noindent \textbf{Question:}
Let $X$ be a smooth projective variety defined over a field $k$ of positive characteristic. 
Does there exist a semi--essentially finite bundle over $X$ which is not an essentially finite?

If $\pi^{\acute{e}t}(X, x) = 0$, then by \cite[Corollary 8.3]{La12}, we have $\pi^S(X, x)\cong \pi^{\rm EN}(X, x) \cong \pi^N(X, x)$. 
\end{remark}

\subsection{Local unipotent group scheme}
Let $\mathcal{C}^{\rm loc}(X)$ denote the category of $F$-trivial bundle on $X$. For a rational point $x\in X(k)$, the category $\mathcal{C}^{\rm loc}(X)$ is a Tannakian category and $\pi^{\rm loc}(X,x)$ is the local fundamental group scheme corresponding to it (see \cite{MS02}).
Let us denote $\mathcal{C}^{\rm loc}_U(X)$ the intersection category of $\mathcal{C}^{\rm loc}(X)$ and $\mathcal{C}^{\rm uni}(X)$. So, this category contains vector bundles that are unipotent as well as $F$-trivial. Clearly, $\mathcal{C}^{\rm loc}_U(X)$ is a Tannakian category. Let $\pi^{\rm loc}_U(X,x)$ denote the corresponding affine group scheme and $\pi^{\rm loc}_U(X,x)_{\rm ab}$ denote the maximal abelian quotient of it.

\begin{theorem}\label{LocalU}
Let $X$ be a smooth projective variety defined over an algebraically closed field of positive characteristic. Then, 
$$\pi^{\rm loc}_U(X,x)_{\rm ab} \simeq \varprojlim G_{\ell \ell}$$
where the inverse limit is taken over all finite local-local group subschemes $G_{\ell\ell}$ of $\mathrm{Pic}^0X$.
\end{theorem}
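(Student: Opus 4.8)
The strategy mirrors the Tannakian analysis used in Theorem \ref{ABEN} and in Nori's computation of $\pi^{\rm uni}_{\rm ab}$. The key observation is that $\pi^{\rm loc}_U(X,x)$ is a pro-finite local (infinitesimal) group scheme, since it is simultaneously a quotient of $\pi^{\rm loc}(X,x)$ (which is pro-finite-local by definition, as $F$-trivial bundles are trivialized by iterated Frobenius) and of $\pi^{\rm uni}(X,x)$ (which is pro-unipotent). A finite group scheme that is both local and unipotent is local-local, so its maximal abelian quotient $\pi^{\rm loc}_U(X,x)_{\rm ab}$ is a pro-(finite local-local abelian) group scheme. By Cartier duality, such a group scheme is of the form $\varprojlim \hat{H}$ for finite local-local abelian group schemes $H$, and the content of the theorem is to identify the $H$'s with the finite local-local subgroup schemes of $\mathrm{Pic}^0 X$.

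First I would set up the Tannakian dictionary: representations of $\pi^{\rm loc}_U(X,x)_{\rm ab}$ correspond to the full subcategory of $\mathcal{C}^{\rm loc}_U(X)$ whose objects admit a filtration with successive quotients $\mathcal{O}_X$ — equivalently, iterated extensions of $\mathcal{O}_X$ by itself inside $\mathcal{C}^{\rm loc}_U(X)$, with the abelianization recording only the ``one-step'' information. Following Nori \cite[Ch.\ IV]{No}, the abelian part is controlled by the group of extensions of $\mathcal{O}_X$ by $\mathcal{O}_X$ in the relevant category, i.e.\ by a subgroup of $\mathrm{Ext}^1(\mathcal{O}_X,\mathcal{O}_X) = H^1(X,\mathcal{O}_X)$, together with the constraint that the resulting group scheme be local. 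Concretely, a character of a finite local-local quotient $G$ of $\pi^{\rm loc}_U(X,x)$ is trivial (as $G$ has no nontrivial characters), so the abelian quotient is ``purely infinitesimal-unipotent'' and by Dieudonné theory / Cartier duality is dual to a subgroup scheme of $\mathrm{Pic}^0 X$; one then checks that a finite local subgroup scheme $G \subset \mathrm{Pic}^0 X$ gives a quotient of $\pi^{\rm loc}_U$ precisely when $\hat{G}$ is local-local, i.e.\ when $G$ itself is local-local.

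The concrete bridge I would use is the standard identification, for a finite local group scheme $N$, of the set $\mathrm{Hom}(\pi^N(X,x), N)$ (equivalently $N$-torsors over $X$ trivial at $x$) with $H^1_{\mathrm{fppf}}(X,N)$, together with the Weil--Barsotti type exact sequence relating $H^1_{\mathrm{fppf}}(X,N)$ and $\mathrm{Hom}(\hat N, \mathrm{Pic}^0 X)$ when $N$ is infinitesimal. A torsor for such $N$ is $F$-trivial (since $N$ is killed by a power of Frobenius, so is the torsor after pullback by $F^n$) and its associated vector bundles are unipotent exactly when $\hat N$ is unipotent, i.e.\ when $N$ is of multiplicative-free infinitesimal type — but being in $\mathcal{C}^{\rm loc}_U$ forces both $N$ local and the bundle unipotent, which together say $N$ is local-local. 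Dualizing $\mathrm{Hom}(\hat N,\mathrm{Pic}^0 X) \cong \mathrm{Hom}(\hat N, G_{\ell\ell})$ over local-local $G_{\ell\ell} \subset \mathrm{Pic}^0 X$ and passing to the (co)limit over all such $N$ yields $\pi^{\rm loc}_U(X,x)_{\rm ab} \simeq \varprojlim \widehat{\widehat{G_{\ell\ell}}} \simeq \varprojlim G_{\ell\ell}$.

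The main obstacle I anticipate is the precise bookkeeping in the last step: showing that the pro-object one gets is exactly the inverse limit over \emph{local-local} subgroup schemes of $\mathrm{Pic}^0 X$, with the correct transition maps, rather than over some a priori larger or smaller system. This requires (i) verifying that every finite local-local $G_{\ell\ell} \subset \mathrm{Pic}^0 X$ genuinely arises — i.e.\ that the universal $G_{\ell\ell}$-torsor over $X$ (coming from the inclusion into $\mathrm{Pic}^0 X = \mathrm{Pic}^0 X$ and the Poincaré bundle) has associated bundles lying in $\mathcal{C}^{\rm loc}_U(X)$, using that $\hat G_{\ell\ell}$ is local-local hence unipotent, and that $F$-triviality passes to associated bundles; and (ii) verifying the converse, that any finite local-local abelian quotient of $\pi^{\rm loc}_U(X,x)$ embeds, via its Cartier dual, into $\mathrm{Pic}^0 X$ — this is where Nori's argument for $\pi^{\rm uni}_{\rm ab}$ in positive characteristic (identifying abelian unipotent-type quotients with duals of infinitesimal subgroups of $\mathrm{Pic}^0$) is invoked almost verbatim, now intersected with the $F$-triviality (i.e.\ locality) condition on $G_{\ell\ell}$ itself. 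Once both inclusions of pro-systems are in place, the isomorphism follows from self-duality of the inverse limit of finite local-local group schemes.
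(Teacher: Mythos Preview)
Your plan arrives at the right answer but by a longer road than the paper takes. The paper's argument is essentially a two-line deduction from Nori's formula $\pi^{\rm uni}_{\rm ab}(X,x) \simeq \varprojlim \hat G$ (inverse limit over finite local $G \subset \mathrm{Pic}^0 X$): one decomposes each such $G$ as $G_{\ell\ell} \oplus G_{\ell r}$, so that $\hat G \simeq \hat{G_{\ell\ell}} \oplus \hat{G_{\ell r}}$ with the first summand local--local and the second reduced--local; since $\pi^{\rm loc}(X,x)$ is pro-(finite local), intersecting with $\mathcal{C}^{\rm loc}$ discards the reduced summand and leaves $\varprojlim G_{\ell\ell}$. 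No torsor classification, fppf cohomology, or Weil--Barsotti type input appears; the known inverse system is simply split.

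Your route through $H^1_{\rm fppf}(X,N)$ and its identification with $\mathrm{Hom}(\hat N,\mathrm{Pic}^0 X)$ effectively re-derives Nori's computation of $\pi^{\rm uni}_{\rm ab}$ with the locality constraint built in from the start; this buys a more self-contained argument at the cost of length, but is not needed once Nori's result is taken as input. One small slip to correct: the associated vector bundles of an $N$-torsor are all unipotent exactly when $N$ itself (not $\hat N$) is unipotent, since this is a condition on the representations of $N$; combined with $N$ local you still obtain $N$ local--local, so your conclusion is unaffected. Your anticipated ``bookkeeping'' obstacle---matching the resulting pro-object with the inverse limit over local--local subgroup schemes of $\mathrm{Pic}^0 X$---is handled in the paper simply by the observation that Cartier duality preserves the local--local type, so the two indexing systems coincide.
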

\begin{proof}
We have $$\pi_{\rm ab}^{\rm uni}(X,x)\simeq \varprojlim \hat{G}$$
where $\hat{G}$ denotes the Cartier dual of $G$, and the inverse limit is taken over all 
finite local group subschemes $G$ of $\mathrm{Pic}^0\:X$. Since $G$ is a finite local group scheme, we can decompose it as a direct sum of local-local $G_{\ell\ell}$ and local-reduced $G_{\ell\scr}$ group scheme.
$$G\simeq G_{\ell\ell}\oplus G_{\ell\scr}$$
$$\hat{G} \simeq \hat{G_{\ell\ell}}\oplus \hat{G_{\ell\scr}}$$

Recall that a local-local group scheme means the group scheme and its Cartier dual are both local group schemes. Therefore, the dual of local-local group scheme $\hat{G_{\ell\ell}}$ is also a local-local group scheme. The local-reduced group scheme means the group scheme is local, and its Cartier dual is a reduced group scheme. Therefore, the dual of local-reduced scheme $\hat{G_{\ell\scr}}$ is reduced (more specifically reduced-local). 

By the universal property of $\pi^{\rm loc}(X,x)$, we have $\pi^{\rm loc}(X,x)$ as the inverse limit of the system of a finite local group scheme, which decomposes into the local-local and local-reduced part. Since $\mathcal{C}^{\rm loc}_U(X)=\mathcal{C}^{\rm loc}(X) \cap \mathcal{C}^{\rm uni}(X)$, we need to find the common component of group scheme in $\pi_{\rm ab}^{\rm uni}(X,x)$ and $\pi^{\rm loc}(X,x)$, which is a local-local group scheme as mentioned above. Hence,
$$\pi^{\rm loc}_U(X,x)_{\rm ab} \simeq \varprojlim G_{\ell\ell}$$
where the inverse limit is taken over all finite local-local group subschemes $G_{\ell\ell}$ of $\mathrm{Pic}^0X$.
\end{proof}

\begin{remark}\label{rem-Eloc}
\rm Let $C$ be an ordinary elliptic curve over an algebraically closed field of characteristic $p>0$. 
Let $\mathcal{I}_{(r,0)}$ denote the isomorphism classes of rank $r$ and degree $0$ indecomposable 
vector bundle on $C$. From Atiyah's classification of vector bundles over an elliptic curve \cite{At}, 
there is rank $r$ vector bundle $E_r\in \mathcal{I}_{(r,0)}$ such that all other vector bundles in 
$\mathcal{I}_{(r,0)}$ is of the form $E_r\otimes L$ for some $L\in \mathrm{Pic}^0C$ also there is 
an exact sequence
$$
0\rightarrow E_{r-1} \rightarrow E_r \rightarrow \mathcal{O}_C \rightarrow 0
$$
Hence, $E_r$ is an unipotent vector bundle. From \cite[Proposition 2.10]{Od}, we have 
$(F^n_C)^*E_r\simeq E_r$ for all $n\in \mathbb{N}$ and hence $E_r$ cannot be $F$-trivial bundle.
\end{remark}

\begin{theorem}\label{ABLocal}
Let $A$ be an abelian variety of $p$-rank $r$ defined over an algebraically closed field $k$ of characteristic $p>0$ then,
we have
$$\pi^{\rm loc}(A,0)\simeq T_p^0(A) \times \varprojlim (\mu_{p^n})^r$$
\end{theorem}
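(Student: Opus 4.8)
The plan is to obtain $\pi^{\rm loc}(A,0)$ as the maximal pro-local quotient of the Nori fundamental group scheme $\pi^N(A,0)$ and to read its local part off from Nori's explicit description of the latter. Since every $F$-trivial bundle is essentially finite, $\mathcal{C}^{\rm loc}(A)$ is a Tannakian subcategory of $\mathcal{C}^N(A)$, and by the universal property of $\pi^{\rm loc}$ (cf.\ \cite{MS02}) the group scheme $\pi^{\rm loc}(A,0)$ is the inverse limit of the finite local group scheme quotients of $\pi^N(A,0)$. Now by \cite[Remark 3]{No83} we have $\pi^N(A,0)\simeq\varprojlim_n A_n$, the limit being over the positive integers ordered by divisibility, with transition maps induced by the multiplication morphisms; in particular $\pi^N(A,0)$ is pro-finite, so every finite local quotient of it factors through $A_n$ for some $n$.

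I would then substitute the decomposition of $A_n$ recalled above. For $n=tp^m$ with $p\nmid t$ one has $A_n\simeq A_t\times A_{p^m}^{\scr}\times\hat A_{p^m}^{\scr}\times A_{p^m}^0$, where $A_t\simeq(\mathbb{Z}/t\mathbb{Z})^{2g}$ and $A_{p^m}^{\scr}\simeq(\mathbb{Z}/p^m\mathbb{Z})^r$ are \'etale, while $\hat A_{p^m}^{\scr}\simeq(\mu_{p^m})^r$ and the local--local group scheme $A_{p^m}^0$ are local. Over an algebraically closed field any homomorphism from a finite \'etale group scheme to a finite local one is trivial, so a finite local quotient of $A_n$ annihilates the \'etale factor $A_t\times A_{p^m}^{\scr}$, i.e.\ factors through $\hat A_{p^m}^{\scr}\times A_{p^m}^0\simeq(\mu_{p^m})^r\times A_{p^m}^0$, which is itself local. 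Consequently the finite local quotients of $\pi^N(A,0)$ are cofinal with the subsystem $\bigl\{(\mu_{p^m})^r\times A_{p^m}^0\bigr\}_{m\ge 1}$, whose transition maps are those induced by multiplication by $p$, and therefore $\pi^{\rm loc}(A,0)\simeq\varprojlim_m\bigl((\mu_{p^m})^r\times A_{p^m}^0\bigr)$.

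Finally, as an inverse limit of finite products is the product of the inverse limits, this yields
$$
\pi^{\rm loc}(A,0)\;\simeq\;\Bigl(\varprojlim_m(\mu_{p^m})^r\Bigr)\times\Bigl(\varprojlim_m A_{p^m}^0\Bigr)\;=\;\varprojlim(\mu_{p^n})^r\times T_p^0(A),
$$
by the definition $T_p^0(A)=\varprojlim A_{p^n}^0$. I do not expect a serious obstacle: the one point that needs care is the identification of $\pi^{\rm loc}(A,0)$ with the maximal pro-local quotient of $\pi^N(A,0)$ and the fact that passing to this quotient commutes with the pro-finite inverse limit --- both standard consequences of the description of $F$-trivial bundles via local torsors and of the connected--\'etale decomposition over an algebraically closed field; everything else is bookkeeping with the structure of $A_n$.
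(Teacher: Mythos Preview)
Your argument is correct, but it follows a genuinely different route from the paper's. The paper does not pass through $\pi^{N}(A,0)$ at all; instead it uses the structural decomposition of an abelian affine group scheme into its unipotent and diagonal parts \cite[9.5]{W}. The diagonal part is identified via its character group: line bundles in $\mathcal{C}^{\rm loc}(A)$ are exactly the $p^{n}$-torsion line bundles, so the character group is $\varinjlim(\mathbb{Z}/p^{n}\mathbb{Z})^{r}$ and the diagonal part is $\varprojlim(\mu_{p^{n}})^{r}$. The unipotent part is, by definition, the group scheme $\pi^{\rm loc}_{U}(A,0)$ attached to $\mathcal{C}^{\rm loc}(A)\cap\mathcal{C}^{\rm uni}(A)$, and the paper invokes the preceding Theorem~\ref{LocalU} to identify this with the inverse limit of local--local subgroup schemes of $\mathrm{Pic}^{0}A$, i.e.\ with $T_{p}^{0}(A)$.

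Your approach is arguably more direct for abelian varieties: once one accepts that $\pi^{\rm loc}(A,0)$ is the maximal pro-local quotient of $\pi^{N}(A,0)\simeq\varprojlim A_{n}$, the connected--\'etale decomposition of each $A_{n}$ immediately yields the answer, and you bypass Theorem~\ref{LocalU} entirely. The paper's approach, on the other hand, is intrinsic to the Tannakian category and fits the overall pattern of the article (compute the abelianization by separately computing its unipotent and diagonal parts), so it generalises more readily to the non-abelian-variety setting treated in Remark~\ref{ABX}. The one step you flag as needing care --- that every finite local quotient of $\pi^{N}(A,0)$ arises from an $F$-trivial torsor, so that $\pi^{\rm loc}(A,0)$ really is the full pro-local quotient --- is indeed standard (a finite local $G$ is killed by a power of Frobenius), but it would be worth stating explicitly rather than deferring to a parenthetical.
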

\begin{proof}
\textbf{Diagonal part:} As we know, the group of characters is the same as the group of one-dimensional representations and by Tannaka duality, which is the same as the group of line bundles in $\mathcal{C}^{\rm loc}(A)$. Let $\mathbb{F}_p$ denote the group of line bundle in $\mathcal{C}^{\rm loc}(A)$.
$$\mathbb{F}_p = \{L\in \mathcal{C}^{\rm loc}(A) : (F^n)^*L\simeq \mathcal{O}_A \text{ for some } n\in \mathbb{N}\}$$
$$\mathbb{F}_p = \{L\in \mathcal{C}^{\rm loc}(A) : L^{\otimes p^n}\simeq \mathcal{O}_A \text{ where } n\in \mathbb{N}\} $$
$$\mathbb{F}_p \simeq \varinjlim \left( \frac{\mathbb{Z}}{p^n\mathbb{Z}}\right)^r $$
$$\mathrm{Spec}\:k[\mathbb{F}_p] \simeq  \varprojlim (\mu_{p^n})^r$$
\textbf{Unipotent part:} Unipotent part is the affine group scheme corresponding to the Tannakian category $\mathcal{C}^{\rm loc}(A) \cap \mathcal{C}^{\rm uni}(A)$, that is $\pi^{\rm loc}_U(A,0)$. By Theorem \ref{LocalU}, we have 
$$\pi^{\rm loc}_U(A,0) \simeq \varprojlim G_{\ell\ell} \simeq T_p^0(A)$$
where the inverse limit is taken over all finite local-local group subschemes $G_{\ell\ell}$ of $\mathrm{Pic}^0A$.
Hence, by \cite[9.5]{W}, the result follows.
\end{proof}

\begin{remark}\label{ABX}
\rm From the above Theorem \ref{ABLocal}, one can observe that the maximal abelian quotient of local fundamental group scheme denoted by $\pi^{\rm loc}_{\rm ab}(X,x)$ is isomorphic to the product of $\pi^{\rm loc}_U(X,x)_{\rm ab}$ and $\mathrm{Diag}(\mathbb{F}_p)$, where $\mathbb{F}_p$ denote the abstract group of $p^n$-torsion line bundles for all $n\in \mathbb{N}$, which is subgroup of $\mathrm{Pic}^\tau X$.
\end{remark}

\section{Properties}\label{sec-properties}
\subsection{The Albanese morphism}
Let $X$ be a smooth projective variety over an algebraically closed field $k$ and $x\in X$ be 
a fixed point. The variety $\mathrm{Alb}\:X$ is dual to the reduced scheme underlying 
$\mathrm{Pic}^0\:X$. Note that $\mathrm{Alb}\:X$ is an abelian variety and let 
$alb_X:X\longrightarrow \mathrm{Alb}\:X$ be the Albanese morphism mapping $x$ to 
$0_{\mathrm{Alb}\:X}$. 
So, there is an induced homomorphism $\psi: \pi^{\mathrm{EN}}_{\rm ab}(X,x)\longrightarrow 
\pi^{\mathrm{EN}}(\mathrm{Alb}\:X,0_{\mathrm{Alb}\:X})$. In this section, we aim to find the kernel 
of the homomorphism $\psi$. The kernel of multiplication by $n$ map $n_S\colon S\longrightarrow S$ is denoted by $_nS$ for a commutative group scheme $S$.

If the characteristic of $k$ is positive, then we have $\pi^{\mathrm{EN}}_{\rm ab}(X,x)\simeq \pi^N_{\rm ab}(X,x)$ 
(see Remark \ref{R2}). By \cite[Corollary 7.2]{La12}, we conclude the same result for the 
fundamental group scheme $\pi^{\mathrm{EN}}(X, x)$. 

\begin{theorem}
Let $X$ be a smooth projective variety over an algebraically closed field $k$ of the characteristic zero. 
Then, we have the following short exact sequence:
$$
0\rightarrow \mathrm{Diag}(\mathrm{NS}(X)_{\mathrm{tor}})\rightarrow 
\pi^{\mathrm{EN}}_{\rm ab}(X,x) \rightarrow \pi^{\mathrm{EN}}(\mathrm{Alb}\:X,0_{\mathrm{Alb}\:X}) \rightarrow 0
$$ 
\end{theorem}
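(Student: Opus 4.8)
The plan is to combine the structure theorem for $\pi^{\mathrm{EN}}_{\rm ab}(X,x)$ in characteristic zero (Theorem \ref{ABEN}) with the corresponding description over $\mathrm{Alb}\:X$, and then to identify the map $\psi$ factor by factor. Recall from Theorem \ref{ABEN} that in characteristic zero
$$
\pi^{\mathrm{EN}}_{\rm ab}(X,x)\simeq H^1(X,\mathcal{O}_X)^*\times \mathrm{Diag}(\mathbb{F}_\tau(X)),
$$
while for the abelian variety $\mathrm{Alb}\:X$ we have, by Theorem \ref{AVST} together with Theorem \ref{ABEN}, that $\pi^{\mathrm{EN}}(\mathrm{Alb}\:X,0)$ is abelian and isomorphic to $H^1(\mathrm{Alb}\:X,\mathcal{O})^*\times \mathrm{Diag}(\mathbb{F}_\tau(\mathrm{Alb}\:X))$. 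So the target is already abelian, which is consistent with the map factoring through $\pi^{\mathrm{EN}}_{\rm ab}(X,x)$.

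First I would treat the unipotent (continuous) part. The Albanese morphism induces an isomorphism $H^1(\mathrm{Alb}\:X,\mathcal{O})\xrightarrow{\ \sim\ }H^1(X,\mathcal{O}_X)$ since $\mathrm{Alb}\:X$ is dual to $(\mathrm{Pic}^0 X)_{\rm red}$ and $H^1(\mathrm{Alb}\:X,\mathcal{O})$ is the tangent space to $\mathrm{Pic}^0(\mathrm{Alb}\:X)=\widehat{\mathrm{Alb}\:X}$; dualizing, $H^1(X,\mathcal{O}_X)^*\to H^1(\mathrm{Alb}\:X,\mathcal{O})^*$ is an isomorphism. Hence $\psi$ is an isomorphism on unipotent parts, contributing nothing to the kernel and covering the whole unipotent part of the target. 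This is essentially the characteristic-zero analogue of \cite[Corollary 7.2]{La12}, and indeed one can alternatively just invoke the $S$-fundamental-group statement and restrict to the relevant Tannakian subcategory.

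Next I would treat the diagonal part, where the kernel lives. Here $\mathbb{F}_\tau(X)$ is the group of torsion line bundles on $X$, i.e.\ $(\mathrm{Pic}^\tau X)_{\rm red,tor}$, while $\mathbb{F}_\tau(\mathrm{Alb}\:X)=(\mathrm{Pic}^0(\mathrm{Alb}\:X))_{\rm tor}=(\widehat{\mathrm{Alb}\:X})_{\rm tor}$. Pullback along $alb_X$ induces $(\mathrm{Pic}^0 X)_{\rm red}=\widehat{\mathrm{Alb}\:X}\xrightarrow{\ \sim\ }(\mathrm{Pic}^0 X)_{\rm red}$, in fact the map $\mathrm{Pic}^0(\mathrm{Alb}\:X)\to \mathrm{Pic}^0 X$ is an isomorphism on reduced connected components, so on torsion subgroups we get $(\widehat{\mathrm{Alb}\:X})_{\rm tor}\hookrightarrow \mathbb{F}_\tau(X)$ with cokernel exactly $\mathrm{NS}(X)_{\rm tor}$, using the short exact sequence $0\to (\mathrm{Pic}^0 X)_{\rm red}\to (\mathrm{Pic}^\tau X)_{\rm red}\to \mathrm{NS}(X)_{\rm tor}\to 0$ recalled in the preliminaries and the fact that torsion line bundles are precisely $(\mathrm{Pic}^\tau X)_{\rm red,tor}$. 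Applying the exact contravariant functor $\mathrm{Diag}(-)$ (which sends short exact sequences of abelian groups to short exact sequences of diagonalizable group schemes, as already used for the $\mathrm{Pic}^\tau$ sequence in the preliminaries) turns the surjection $\mathbb{F}_\tau(X)\twoheadrightarrow \mathbb{F}_\tau(\mathrm{Alb}\:X)$-dual-wise into the exact sequence
$$
0\to \mathrm{Diag}(\mathrm{NS}(X)_{\rm tor})\to \mathrm{Diag}(\mathbb{F}_\tau(X))\to \mathrm{Diag}(\mathbb{F}_\tau(\mathrm{Alb}\:X))\to 0.
$$
Splicing the unipotent-part isomorphism and this diagonal-part sequence via the product decompositions yields the desired short exact sequence.

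The main obstacle I anticipate is not any single hard estimate but the careful bookkeeping needed to check that the Tannakian map $\psi$ really is, under the identifications of Theorem \ref{ABEN}, the direct sum of the $H^1$-dual isomorphism and the $\mathrm{Diag}$ of the pullback on torsion line bundles — i.e.\ naturality of the structure theorem's isomorphism with respect to morphisms of varieties. Concretely I would verify that for a one-dimensional representation of $\pi^{\mathrm{EN}}(\mathrm{Alb}\:X,0)$, corresponding to a torsion line bundle $M$ on $\mathrm{Alb}\:X$, its image under $\psi$ corresponds to $alb_X^*M$, and that the unipotent blocks match because $alb_X$ induces an equivalence on unipotent bundles (both categories being controlled by $H^1(-,\mathcal{O})$ and these agreeing via $alb_X$). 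Once this compatibility is in place, exactness of the sequence is a formal consequence of the two computations above; one should also note that $\mathrm{Diag}(\mathrm{NS}(X)_{\rm tor})$ is a finite group scheme, consistent with it being the ``extra'' part of $\pi^{\mathrm{EN}}_{\rm ab}(X,x)$ killed by passing to the Albanese variety.
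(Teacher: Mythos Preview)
Your proposal is correct and follows essentially the same route as the paper: decompose both sides via Theorem \ref{ABEN} into unipotent and diagonal factors, identify the unipotent parts through $H^1(X,\mathcal{O}_X)\simeq T_0(\mathrm{Pic}^0 X)\simeq H^1(\mathrm{Alb}\,X,\mathcal{O})$, and obtain the diagonal kernel by applying $\mathrm{Diag}(-)$ to the torsion of the sequence $0\to(\mathrm{Pic}^0 X)_{\rm red}\to(\mathrm{Pic}^\tau X)_{\rm red}\to\mathrm{NS}(X)_{\rm tor}\to 0$. The only cosmetic difference is that the paper passes through the direct limit $\mathbb{F}_\tau=\varinjlim_n{}_n\mathrm{Pic}^\tau X$ and invokes finiteness of $\mathrm{NS}(X)_{\rm tor}$, whereas you take torsion subgroups directly; both rely implicitly on the divisibility of the abelian variety $(\mathrm{Pic}^0 X)_{\rm red}$ for right exactness, and your explicit flagging of the naturality check for $\psi$ under the product decomposition is a point the paper leaves tacit.
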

\begin{proof}
Let $\mathbb{F}_\tau(X)$ and $\mathbb{F}_\tau(\mathrm{Alb}\:X)$ denote the group of finite line bundles on $X$ and $\mathrm{Alb}\:X$ respectively. 
By Theorem \ref{ABEN}, we have the following
$$\pi^{\mathrm{EN}}_{\rm ab}(X,x)\simeq H^1(X,\mathcal{O}_X)^* \times \mathrm{Diag}(\mathbb{F}_\tau(X)),$$
and by replacing $X$ with $\mathrm{Alb}\:X$, we have
$$
\pi^{\mathrm{EN}}(\mathrm{Alb}\:X,0_{\mathrm{Alb}\:X})\simeq 
H^1(\mathrm{Alb}\:X,\mathcal{O}_{\mathrm{Alb}\:X})^* \times \mathrm{Diag}(\mathbb{F}_\tau(\mathrm{Alb}\:X))\,.
$$
Since $\mathrm{Alb}\:X$ is an abelian variety over the field of characteristic zero, we have 
$\mathrm{Pic}^\tau\:(\mathrm{Alb}\:X) \simeq \mathrm{Pic}^0\:(\mathrm{Alb}\:X)$ and
$\mathrm{Pic}^0(\mathrm{Alb}\:X)\simeq (\mathrm{Pic}^0\:X)_{\rm red} \simeq \mathrm{Pic}^0\:X$.

Let $T_0(\mathrm{Pic}^0\:X)$ denote the tangent space at $0$. We know that 
$T_0(\mathrm{Pic}^0\:X)\simeq H^1(X,\mathcal{O}_X)$. On the other hand, we have 
$T_0(\mathrm{Pic}^0\:(\mathrm{Alb}\:X)) \simeq H^1(\mathrm{Alb}\:X, \mathcal{O}_{\mathrm{Alb}\:X})$. 
As $\mathrm{Pic}^0\:(\mathrm{Alb}\:X) \simeq \mathrm{Pic}^0\:X$, we get 
$H^1(X,\mathcal{O}_X) \simeq H^1(\mathrm{Alb}\:X, \mathcal{O}_{\mathrm{Alb}\:X})$. Hence, 
the unipotent part of $\mathrm{Ker}\:\psi$ is trivial.

As $\mathbb{F}_\tau(X)$ denote the torsion part of the group $\mathrm{Pic}^{\tau}\:X$, so we 
can write $$\mathbb{F}_\tau(X) = \varinjlim_{n\in \mathbb{N}}\: _n\mathrm{Pic}^{\tau}\:X\,$$
and
$$
\mathbb{F}_\tau(\mathrm{Alb}\:X) = \varinjlim_{n\in \mathbb{N}}\: 
_n\mathrm{Pic}^{\tau}\:(\mathrm{Alb}\:X) =  \varinjlim_{n\in \mathbb{N}}\: _n\mathrm{Pic}^{0}\:X.
$$
Since
$$
0\rightarrow \mathrm{Pic}^0\:X \rightarrow \mathrm{Pic}^{\tau}\:X \rightarrow 
\mathrm{NS}(X)_{\mathrm{tor}}\rightarrow 0
$$
is an exact sequence, we get an exact sequence
$$
0\rightarrow\: _n\mathrm{Pic}^0\:X \rightarrow\: _n\mathrm{Pic}^{\tau}\:X 
\rightarrow\: _n\mathrm{NS}(X)_{\mathrm{tor}}\rightarrow 0
$$
for all $n\in \mathbb{N}$.

Since $\mathrm{NS}(X)_{\mathrm{tor}}$ is finite, there is some $N\in \mathbb{N}$ such that 
$_N\mathrm{NS}(X)_{\mathrm{tor}}=\mathrm{NS}(X)_{\mathrm{tor}}$ and by taking direct limit, we get
$$
0\rightarrow \mathbb{F}_\tau(\mathrm{Alb}\:X) \rightarrow \mathbb{F}_\tau(X) 
\rightarrow \mathrm{NS}(X)_{\mathrm{tor}}\rightarrow 0\,.
$$
So, we have an exact sequence
$$
0\rightarrow \mathrm{Diag}(\mathrm{NS}(X)_{\mathrm{tor}})
\rightarrow \mathrm{Diag}(\mathbb{F}_\tau(X))\rightarrow 
\mathrm{Diag}(\mathbb{F}_\tau(\mathrm{Alb}\:X))\rightarrow 0\,.
$$
Therefore, the diagonal part of $\mathrm{Ker}\:\psi$ is 
$\mathrm{Diag}(\mathrm{NS}(X)_{\mathrm{tor}})$.
\end{proof}

\begin{proposition}\label{BUALB}
Let $X$ be a smooth projective variety over an algebraically closed field $k$ of the characteristic $p>0$. Then, we have the following short exact sequence:
$$0\rightarrow \varprojlim_{G \subset \mathrm{Pic}^0X} G/G_{\rm red} \times \mathrm{Diag}\left(\varinjlim_{n\in \mathbb{N}}\: _{p^n}\mathrm{NS}(X)_{\mathrm{tor}}\right)\rightarrow \pi^{\rm loc}_{\rm ab}(X,x) \rightarrow \pi^{\rm loc}(\mathrm{Alb}\:X,0_{\mathrm{Alb}\:X}) \rightarrow 0$$
where the inverse limit is taken over all the local-local group subscheme $G$ of $\mathrm{Pic}^0X$.
\end{proposition}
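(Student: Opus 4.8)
The plan is to follow the pattern of the characteristic-zero statement, analysing the unipotent (local--local) and the diagonal (multiplicative-type) parts of the homomorphism $\psi\colon \pi^{\rm loc}_{\rm ab}(X,x)\to \pi^{\rm loc}(\mathrm{Alb}\:X,0_{\mathrm{Alb}\:X})$ induced by $alb_X$ (which factors through $\pi^{\rm loc}_{\rm ab}(X,x)$ because $\pi^{\rm loc}(\mathrm{Alb}\:X,0_{\mathrm{Alb}\:X})$ is abelian by Theorem~\ref{ABLocal}). By Theorem~\ref{ABLocal}, Theorem~\ref{LocalU} and Remark~\ref{ABX} we may write
$$\pi^{\rm loc}_{\rm ab}(X,x)\simeq \pi^{\rm loc}_U(X,x)_{\rm ab}\times\mathrm{Diag}(\mathbb{F}_p(X)),$$
$$\pi^{\rm loc}(\mathrm{Alb}\:X,0_{\mathrm{Alb}\:X})\simeq T_p^0(\mathrm{Alb}\:X)\times\mathrm{Diag}(\mathbb{F}_p(\mathrm{Alb}\:X)),$$
where $\mathbb{F}_p(-)$ is the group of $p$-power torsion (equivalently $F$-trivial) line bundles, $\pi^{\rm loc}_U(X,x)_{\rm ab}\simeq\varprojlim G_{\ell\ell}$ over finite local--local subgroup schemes $G_{\ell\ell}\subset\mathrm{Pic}^0X$, and $T_p^0(\mathrm{Alb}\:X)=\pi^{\rm loc}_U(\mathrm{Alb}\:X,0_{\mathrm{Alb}\:X})\simeq\varprojlim H_{\ell\ell}$ over finite local--local subgroup schemes $H_{\ell\ell}\subset\mathrm{Pic}^0(\mathrm{Alb}\:X)$. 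Since there is no nonzero homomorphism between a unipotent and a diagonalizable affine group scheme in either direction, $\psi$ respects these product decompositions, so it suffices to identify the kernel and prove surjectivity on each factor. We also record the input $\mathrm{Pic}^0(\mathrm{Alb}\:X)\simeq(\mathrm{Pic}^0X)_{\rm red}$ and $\mathrm{Pic}^\tau(\mathrm{Alb}\:X)=\mathrm{Pic}^0(\mathrm{Alb}\:X)$ (as $\mathrm{Alb}\:X$ is an abelian variety dual to $(\mathrm{Pic}^0X)_{\rm red}$), and the fact that $alb_X^*$ induces on $\mathrm{Pic}^0$ the canonical closed immersion $(\mathrm{Pic}^0X)_{\rm red}\hookrightarrow\mathrm{Pic}^0X$.

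For the diagonal part, $\psi$ induces on character groups the pullback $alb_X^*\colon\mathbb{F}_p(\mathrm{Alb}\:X)\to\mathbb{F}_p(X)$; this is injective with image the $p$-power torsion of $\mathrm{Pic}^0X$, since the $k$-points of $\mathrm{Pic}^0X$ and of $(\mathrm{Pic}^0X)_{\rm red}$ coincide. From the exact sequence $0\to(\mathrm{Pic}^0X)_{\rm red}\to(\mathrm{Pic}^\tau X)_{\rm red}\to\mathrm{NS}(X)_{\rm tor}\to0$, using that the group of $k$-points of an abelian variety is divisible and that $\mathrm{NS}(X)_{\rm tor}$ is finite, passage to $p$-power torsion and to the colimit over $n$ is exact and yields a short exact sequence of abstract groups
$$0\to\mathbb{F}_p(\mathrm{Alb}\:X)\to\mathbb{F}_p(X)\to\varinjlim_{n\in\mathbb{N}}{}_{p^n}\mathrm{NS}(X)_{\rm tor}\to0.$$
Applying the exact contravariant functor $\mathrm{Diag}(-)$ then shows that the diagonal part of $\psi$ is surjective with kernel $\mathrm{Diag}\bigl(\varinjlim_n{}_{p^n}\mathrm{NS}(X)_{\rm tor}\bigr)$.

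For the unipotent part I pass to Cartier duals. The Cartier dual of $\pi^{\rm loc}_U(X,x)_{\rm ab}$ is the maximal ind--(local--local) subgroup scheme $\varinjlim_{G_{\ell\ell}\subset\mathrm{Pic}^0X}G_{\ell\ell}$ of $\mathrm{Pic}^0X$, and the Cartier dual of $T_p^0(\mathrm{Alb}\:X)$ is the maximal ind--(local--local) subgroup scheme of $\mathrm{Pic}^0(\mathrm{Alb}\:X)\simeq(\mathrm{Pic}^0X)_{\rm red}$, which equals the part of the former contained in $(\mathrm{Pic}^0X)_{\rm red}$. The closed immersion $(\mathrm{Pic}^0X)_{\rm red}\hookrightarrow\mathrm{Pic}^0X$ thus induces a monomorphism of these ind--finite group schemes, whose Cartier dual is the unipotent part of $\psi$; hence that part is faithfully flat, so surjective, and its kernel is the Cartier dual of the cokernel $\varinjlim_{G_{\ell\ell}}G_{\ell\ell}/(G_{\ell\ell}\cap(\mathrm{Pic}^0X)_{\rm red})$. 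Since the class of local--local group schemes is stable under Cartier duality, this kernel is precisely $\varprojlim_{G\subset\mathrm{Pic}^0X}G/G_{\rm red}$, the inverse limit over all finite local--local subgroup schemes $G$ of $\mathrm{Pic}^0X$ (with $G_{\rm red}:=G\cap(\mathrm{Pic}^0X)_{\rm red}$). Combining the two factors gives the asserted short exact sequence.

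The step I expect to be the main obstacle is the unipotent one: verifying that the comparison of maximal ind--(local--local) subgroup schemes really is a monomorphism, that its Cartier dual is surjective with exactly the stated kernel, and that the direct and inverse limits involved behave as expected. This is where the possible non-reducedness of $\mathrm{Pic}^0X$ genuinely enters, and where some care with (ind-)pro-finite group schemes and with the behaviour of $\mathrm{Diag}$ under filtered colimits is needed; the diagonal part and the bookkeeping of the product decompositions should be routine by comparison.
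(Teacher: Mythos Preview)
Your proposal is correct and follows essentially the same route as the paper: decompose both sides into their unipotent (local--local) and diagonal parts via Remark~\ref{ABX} and Theorem~\ref{ABLocal}, handle the diagonal part through the exact sequence $0\to(\mathrm{Pic}^0X)_{\rm red}\to(\mathrm{Pic}^\tau X)_{\rm red}\to\mathrm{NS}(X)_{\rm tor}\to 0$ restricted to $p$-power torsion, and handle the unipotent part by comparing local--local subgroup schemes of $\mathrm{Pic}^0X$ with those of $(\mathrm{Pic}^0X)_{\rm red}\simeq\mathrm{Pic}^0(\mathrm{Alb}\,X)$. The paper's own proof is terser---for the unipotent part it simply writes the exact sequence $0\to(G_{\ell\ell})_{\rm red}\to G_{\ell\ell}\to G_{\ell\ell}/(G_{\ell\ell})_{\rm red}\to 0$ and says ``by taking dual and passing through the inverse limit''---whereas you spell out the passage to Cartier duals, the ind-scheme interpretation, and why $\psi$ respects the product decomposition; your explicit identification $G_{\rm red}:=G\cap(\mathrm{Pic}^0X)_{\rm red}$ also clarifies a notation the paper leaves implicit.
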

\begin{proof}
The Albenese morphism $alb_X: X \longrightarrow \mathrm{Alb}\:X$ induced a morphism between the local fundamental group scheme ${\varphi}: \pi^{\rm loc}_{\rm ab}(X,x) \longrightarrow \pi^{\rm loc}(\mathrm{Alb}\:X,0_{\mathrm{Alb}\:X})$. By Remark \ref{ABX}, we have 
$$\pi^{\rm loc}_{\rm ab}(X,x)\simeq \varprojlim_{G_{\ell\ell}\subset \mathrm{Pic}^0X} G_{\ell\ell} \times \mathrm{Diag}(\mathbb{F}_p)$$
where the inverse limit is taken over all the local-local group subschemes $G_{\ell\ell}$ of $\mathrm{Pic}^0X$ and $\mathbb{F}_p$ is the subgroup of $(\mathrm{Pic}^\tau X)_{\rm red}$.

Since $\mathrm{Pic}^\tau (\mathrm{Alb}\:X)\simeq \mathrm{Pic}^0(\mathrm{Alb}\:X)\simeq (\mathrm{Pic}^0X)_{\rm red}$, we have
$$\pi^{\rm loc}(\mathrm{Alb}\:X,0_{\mathrm{Alb}\:X}) \simeq \varprojlim_{(G_{\ell\ell})_{\rm red}\subset (\mathrm{Pic}^0X)_{\rm red}} (G_{\ell\ell})_{\rm red} \times \mathrm{Diag}(\mathbb{F}_p(\mathrm{Alb}\:X))$$
where the inverse limit is taken over all the local-local group subschemes $(G_{\ell\ell})_{\rm red}$ of $(\mathrm{Pic}^0X)_{\rm red}$ and $\mathbb{F}_p(\mathrm{Alb}\:X)$ is the subgroup of $\mathrm{Pic}^\tau (\mathrm{Alb}\:X)\simeq (\mathrm{Pic}^0X)_{\rm red}$.

We have the following exact sequence:
$$
0\rightarrow (G_{\ell\ell})_{\rm red} \rightarrow G_{\ell\ell} \rightarrow G_{\ell\ell}/(G_{\ell\ell})_{\rm red} \rightarrow 0\,.
$$
By taking dual and passing through the inverse limit, we get the unipotent part of the kernel.

Note that $\mathbb{F}_p(X)$ denote the group of $p^n$ torsion line bundles for all $n\in \mathbb{N}$. We can write 
$$\mathbb{F}_p(X) = \varinjlim_{n\in \mathbb{N}}\: _{p^n}(\mathrm{Pic}^{\tau}X)_{\rm red}$$
Similarly, we have 
$$\mathbb{F}_p(\mathrm{Alb}\: X) = \varinjlim_{n\in \mathbb{N}}\: _{p^n}(\mathrm{Pic}^{0}X)_{\rm red}$$

For all $n\in \mathbb{N}$, we have the following exact sequence: 
$$0 \rightarrow\: _{p^n}(\mathrm{Pic}^0X)_{\rm red} \rightarrow\: _{p^n}(\mathrm{Pic}^{\tau}X)_{\rm red} \rightarrow\: _{p^n}\mathrm{NS}(X)_{\rm tor} \rightarrow 0$$
By passing through the direct limit, we get the diagonal part of the kernel.
\end{proof}

\subsection{Birational invariance}

\begin{proposition}\label{BU}
Let $Y$ be a smooth complete variety over an algebraically closed field $k$ and let $f \colon X\longrightarrow Y$ be the blow-up of $Y$ along a smooth subvariety $Z\subset Y$. Then $\pi^{\mathrm{EN}}(X,x)\longrightarrow \pi^{\mathrm{EN}}(Y,f(x))$ is an isomorphism.
\end{proposition}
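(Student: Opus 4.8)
The plan is to prove that the pull-back functor $f^{*}\colon \mathcal{C}^{\mathrm{EN}}(Y)\to\mathcal{C}^{\mathrm{EN}}(X)$ is an equivalence of neutral Tannakian categories, compatibly with the fibre functors at $x\in X$ and $f(x)\in Y$; the statement then follows from Tannakian duality. First I would check that $f^{*}$ actually lands in $\mathcal{C}^{\mathrm{EN}}(X)$: a polynomial identity $h(E)\cong g(E)$ pulls back, so $f^{*}$ preserves finiteness, and Nori--semistability is preserved because a non-constant morphism $C\to X$ from a smooth projective curve either composes with $f$ to a non-constant morphism $C\to Y$ (where one uses that $E$ is Nori--semistable) or factors through a fibre of $f$, on which $f^{*}E$ is trivial. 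Hence $f^{*}$ preserves essential finiteness; since it is exact on vector bundles and respects Krull--Schmidt decompositions, pulling back a filtration with essentially finite indecomposable graded pieces and refining it shows $f^{*}V\in\mathcal{C}^{\mathrm{EN}}(X)$.

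Next I would establish full faithfulness and an $\mathrm{Ext}^{1}$-comparison. Since $f$ is the blow-up of a smooth variety along a smooth centre, $f_{*}\mathcal{O}_{X}=\mathcal{O}_{Y}$ and $R^{1}f_{*}\mathcal{O}_{X}=0$. The projection formula gives $f_{*}f^{*}\mathcal{H}om(V,V')=\mathcal{H}om(V,V')$, whence $\mathrm{Hom}_{X}(f^{*}V,f^{*}V')=\mathrm{Hom}_{Y}(V,V')$ and $f^{*}$ is fully faithful; the Leray spectral sequence together with $R^{1}f_{*}\mathcal{O}_{X}=0$ then yields $H^{1}(X,f^{*}\mathcal{F})\cong H^{1}(Y,\mathcal{F})$ for every vector bundle $\mathcal{F}$ on $Y$. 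Taking $\mathcal{F}=A^{\vee}\otimes B$ with $A,B\in\mathcal{C}^{\mathrm{EN}}(Y)$, and noting that $\mathcal{C}^{\mathrm{EN}}$ is closed under extensions of vector bundles (concatenate filtrations), one gets $\mathrm{Ext}^{1}_{\mathcal{C}^{\mathrm{EN}}(X)}(f^{*}A,f^{*}B)\cong\mathrm{Ext}^{1}_{\mathcal{C}^{\mathrm{EN}}(Y)}(A,B)$, the isomorphism being the pull-back map.

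Finally I would prove essential surjectivity. By the birational invariance of the Nori fundamental group scheme, established by the methods of \cite{HM} (see also Corollary \ref{BIP}) --- which amount to showing that an essentially finite, indeed any Nori--semistable, bundle on $X$ is trivial along the exceptional fibres and therefore descends --- the functor $f^{*}$ restricts to an equivalence $\mathcal{C}^{N}(Y)\xrightarrow{\ \sim\ }\mathcal{C}^{N}(X)$; in particular every essentially finite indecomposable bundle on $X$ is of the form $f^{*}G$ with $G$ essentially finite indecomposable on $Y$. Given $W\in\mathcal{C}^{\mathrm{EN}}(X)$ with filtration $W=W_{0}\supset W_{1}\supset\cdots\supset W_{n}=0$ having essentially finite indecomposable graded pieces $W_{i}/W_{i+1}\cong f^{*}G_{i}$, I argue by descending induction: if $W_{i+1}\cong f^{*}V_{i+1}$ with $V_{i+1}\in\mathcal{C}^{\mathrm{EN}}(Y)$, the class of the extension $0\to W_{i+1}\to W_{i}\to W_{i}/W_{i+1}\to 0$ lies in $\mathrm{Ext}^{1}_{X}(f^{*}G_{i},f^{*}V_{i+1})\cong\mathrm{Ext}^{1}_{Y}(G_{i},V_{i+1})$, hence is the pull-back of an extension $0\to V_{i+1}\to V_{i}\to G_{i}\to 0$ with $V_{i}\in\mathcal{C}^{\mathrm{EN}}(Y)$ and $f^{*}V_{i}\cong W_{i}$. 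Thus $W\cong f^{*}V_{0}$ with $V_{0}\in\mathcal{C}^{\mathrm{EN}}(Y)$, so $f^{*}$ is an equivalence, and passing to Tannaka duals gives the isomorphism $\pi^{\mathrm{EN}}(X,x)\to\pi^{\mathrm{EN}}(Y,f(x))$.

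I expect the main obstacle to be the geometric input borrowed from \cite{HM} in the last step: the fact that every Nori--semistable vector bundle on the blow-up $X$ is trivial on the exceptional fibres and hence isomorphic to $f^{*}(f_{*}W)$, together with the vanishing $R^{1}f_{*}(-)=0$ for bundles trivial on the fibres. Once that birational-invariance statement for $\pi^{N}$ is granted, the upgrade from $\mathcal{C}^{N}$ to $\mathcal{C}^{\mathrm{EN}}$ is the purely formal extension-theoretic argument above; the remaining verifications (preservation of the categories, the projection-formula and Leray computations) are routine.
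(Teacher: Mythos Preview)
Your proof is correct but follows a different route from the paper's. The paper proceeds in two steps: surjectivity of $\pi^{\mathrm{EN}}(X,x)\to\pi^{\mathrm{EN}}(Y,f(x))$ is obtained from the commutative square with the $S$-fundamental group schemes, using that $\pi^{S}(X,x)\to\pi^{S}(Y,f(x))$ is an isomorphism by \cite[Lemma~8.3]{La11}; for the closed immersion the paper shows directly that $f_{*}E\in\mathcal{C}^{\mathrm{EN}}(Y)$ whenever $E\in\mathcal{C}^{\mathrm{EN}}(X)$, first for essentially finite $E$ via the Biswas--Dos Santos trivialisation criterion \cite{BS} (if $p^{*}E$ is trivial for some proper surjective $p\colon P\to X$, then $(f\circ p)^{*}(f_{*}E)\simeq p^{*}E$ is trivial, so $f_{*}E$ is essentially finite), and then for semi--essentially finite $E$ by induction on rank using the left-exactness of $f_{*}$ together with a rank count. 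You instead prove that $f^{*}$ is an equivalence of Tannakian categories: full faithfulness from the projection formula, and essential surjectivity by invoking the known birational invariance of $\pi^{N}$ to descend the graded pieces and then using the $\mathrm{Ext}^{1}$-isomorphism (from $R^{1}f_{*}\mathcal{O}_{X}=0$ and Leray) to descend the extensions. The paper's argument is more self-contained at the level of blow-ups and avoids treating $\pi^{N}$-invariance as a black box; your approach is more categorical and the $\mathrm{Ext}^{1}$-descent is a clean reusable template. One caution: your parenthetical reference to Corollary~\ref{BIP} is logically circular, since that corollary is proved after (and, in low dimension, using) Proposition~\ref{BU}; the input you actually need is the birational invariance of $\pi^{N}$ itself, which is available independently from \cite{No} (or, for blow-ups, from \cite[Lemma~8.3]{La11} plus an argument that $f_{*}$ preserves essential finiteness), so you should cite that instead.
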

\begin{proof}
Let $E\in \mathcal{C}^{\mathrm{EN}}(X)$. Then by \cite[Lemma 8.3]{La11}, we have $f_*E$ is locally free and $E\simeq f^*f_*E$ as $\mathcal{C}^{\mathrm{EN}}(X)$ is the full subcategory of $\mathcal{C}^{nf}(X)$. By \cite[Lemma 8.3]{La11} and $\pi^{\mathrm{EN}}(X,x)$ is a quotient of $\pi^{S}(X,x)$, we have the following commutative diagram in which the horizontal maps are surjective and the above vertical map is an isomorphism. So, the below map $\pi^{\mathrm{EN}}(X,x)\longrightarrow \pi^{\mathrm{EN}}(Y,f(x))$ is faithfully flat.
\[
\xymatrix{    
    \pi^S(X,x) \ar[r]^\simeq \ar[d]_{p} & \pi^S(Y,f(x)) \ar[d]^p \\ 
    \pi^{\mathrm{EN}}(X,x) \ar[r]_{}       & \pi^{\mathrm{EN}}(Y,f(x))    } 
\]
To see that $\pi^{\mathrm{EN}}(X,x)\longrightarrow \pi^{\mathrm{EN}}(Y,f(x))$ is a closed immersion, by 
\cite[Proposition 2.21(b)]{DM}, it is enough to prove that for every object 
$E \in \mathcal{C}^{\mathrm{EN}}(X)$, there is an object $E' \in \mathcal{C}^{\mathrm{EN}}(Y)$ such that 
$E\simeq f^*E'$. 

Suppose $E$ is an essentially finite bundle on $X$. By \cite[Theorem 1.1]{BS} there is a 
proper $k$-scheme $P$ with a proper surjective morphism $p:P\longrightarrow X$ such that 
$p^*E$ is trivial. 

Since $(f\circ p)^*(f_*E)\simeq p^*f^*f_*E \simeq p^*E$ is trivial and $f\circ p$ is proper 
morphism, it follows that $f_*E$ is an essentially finite bundle.
If $E$ is semi--essentially finite bundle on $X$, then by induction on rank, we can conclude 
that $f_*E$ is semi--essentially finite bundle. To see this, let $E$ be a semi--essentially 
finite bundle of rank $r$ on $X$, i.e., there is a filtration
$$
E=E_0\supset E_1\supset E_2 \supset...\supset E_{n-1}\supset E_n=0
$$ 
such that $E_i/E_{i+1}$ is an indecomposable essentially finite bundle.
From the filtration, $E_{n-1}$ is an essentially finite subbundle of $E$. Then, the bundle $f_*E_{n-1}$ 
is an essentially finite. Let $F=E/E_{n-1}$. Note that $F$ is a semi--essentially finite bundle 
and $\mathrm{rank}\: F < \mathrm{rank}\: E$ so, by induction, $f_*F$ is semi--essentially finite 
bundle. We have the following exact sequences:
\begin{align}
0\longrightarrow E_{n-1} \longrightarrow E \longrightarrow F \longrightarrow 0 \label{A}
\end{align}
\begin{align}
0\longrightarrow f_*E_{n-1} \longrightarrow f_*E \longrightarrow f_*F \label{B}
\end{align}
For any numerically flat bundle $V$, we have $f^*f_*V \simeq V$ and hence 
$\mathrm{rank } \: f_*V = \mathrm{rank } \: V$. It follows that the last map of 
(\ref{B}) is surjective. Hence, $f_*E$ is a semi--essentially finite bundle.
\end{proof}

By the weak factorization theorem \cite[12.4]{WJ} and Proposition \ref{BU}, we have the 
following:
\begin{cor}\label{BI0}
Let $X$ and $Y$ be smooth projective varieties over an algebraically closed field $k$ 
of characteristic zero and $\phi:X\dashrightarrow Y$ be a birational map. 
Let $x_0\in X(k)$ be a point where $\phi$ is defined. Then $\pi^{\mathrm{EN}}(X,x_0)$ is isomorphic 
to $\pi^{\mathrm{EN}}(Y,\phi(x_0))$.
\end{cor}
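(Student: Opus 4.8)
The plan is to deduce Corollary~\ref{BI0} from Proposition~\ref{BU} by the weak factorization theorem \cite[12.4]{WJ}. First I would recall what weak factorization gives: for a birational map $\phi \colon X \dashrightarrow Y$ between smooth projective varieties over an algebraically closed field of characteristic zero, there is a finite chain of smooth projective varieties
$$
X = X_0 \dashrightarrow X_1 \dashrightarrow \cdots \dashrightarrow X_m = Y
$$
such that each $X_i \dashrightarrow X_{i+1}$ is either a blow-up or a blow-down along a smooth irreducible center, and moreover the chain can be chosen so that it is an isomorphism on a common open set containing a prescribed point. So, fixing the point $x_0 \in X(k)$ where $\phi$ is defined, I would invoke the version of weak factorization that keeps track of a basepoint: the factorization can be arranged to be an isomorphism near $x_0$, so we get a well-defined image point $x_i \in X_i(k)$ at each stage, compatible with the elementary maps, with $x_0 \mapsto \phi(x_0)$ at the last stage.

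Next, for each elementary step $X_i \dashrightarrow X_{i+1}$, which is (up to relabeling) a blow-up $f \colon X_i \to X_{i+1}$ along a smooth center avoiding the relevant point, or its inverse, I would apply Proposition~\ref{BU} directly to conclude that the induced homomorphism $\pi^{\mathrm{EN}}(X_i, x_i) \to \pi^{\mathrm{EN}}(X_{i+1}, x_{i+1})$ (or its inverse, in the blow-down case) is an isomorphism. Here I should be slightly careful: Proposition~\ref{BU} is stated for the blow-up of a smooth complete variety $Y$ along a smooth subvariety $Z$, giving $\pi^{\mathrm{EN}}(X, x) \xrightarrow{\sim} \pi^{\mathrm{EN}}(Y, f(x))$; since each intermediate variety in the weak factorization chain is smooth projective (hence smooth complete), the hypothesis is met, and the basepoint is carried along correctly because the blow-up center is disjoint from (a neighborhood of) the chosen point.

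Finally I would compose: the isomorphisms at each stage chain together to give an isomorphism $\pi^{\mathrm{EN}}(X, x_0) \simeq \pi^{\mathrm{EN}}(Y, \phi(x_0))$, which is exactly the assertion. I would remark that functoriality of $\pi^{\mathrm{EN}}(-, -)$ in the pointed variety (which follows from the Tannakian formalism, pullback of semi--essentially finite bundles being semi--essentially finite) is what makes the composition meaningful and independent of auxiliary choices up to the canonical identifications.

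The main obstacle I anticipate is bookkeeping the basepoint through the factorization: one must ensure that weak factorization can be run so that every elementary transformation is an isomorphism on an open neighborhood of the tracked point, so that Proposition~\ref{BU} applies with a genuine $k$-point rather than requiring the center to be literally a point, and so that the final identification of basepoints is exactly $\phi(x_0)$. This is a known refinement of the weak factorization theorem and causes no real difficulty, but it is the only place where care beyond a formal composition is needed. Everything else is immediate from Proposition~\ref{BU} and functoriality.
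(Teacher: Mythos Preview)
Your proposal is correct and matches the paper's approach exactly: the paper simply states that Corollary~\ref{BI0} follows from the weak factorization theorem \cite[12.4]{WJ} together with Proposition~\ref{BU}, without further detail. Your write-up is a faithful expansion of that one-line justification, including the basepoint bookkeeping, which the paper leaves implicit.
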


\begin{cor}\label{BIP}
Let $X$ and $Y$ be smooth projective varieties over an algebraically closed field $k$ of positive characteristic and $\phi \colon X\dashrightarrow Y$ be a birational map. Let $x_0\in X(k)$ be a point where $\phi$ is defined. Then, we have $\pi^{EN}(X,x_0)\simeq \pi^{EN}(Y,\phi(x_0))$ and $\pi^{\rm loc}(X,x_0) \simeq \pi^{\rm loc}(Y,\phi(x_0))$.
\end{cor}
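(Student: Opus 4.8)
The plan is to mimic the strategy that worked in characteristic zero (Proposition \ref{BU} together with the weak factorization theorem), but since strong forms of weak factorization are not available in positive characteristic, I would instead reduce to the statement for blow-ups along smooth centres and invoke the results of \cite{HM}. Concretely, the main input is that both $\pi^{\mathrm{EN}}$ and $\pi^{\rm loc}$ are built, via Tannakian duality, out of categories of vector bundles (semi--essentially finite bundles, respectively $F$-trivial bundles) that are stable under pullback along proper surjective morphisms and satisfy the pushforward/pullback compatibility $f^*f_* V \simeq V$ for the relevant bundles $V$ when $f\colon X \to Y$ is a blow-up along a smooth centre. This is exactly the mechanism that \cite[Theorem 1.2]{HM} exploits for $\pi^S$; as the authors note in the introduction, the argument there already contains the proof for $\pi^N$ and $\pi^{\rm loc}$, and the present corollary is the assertion that it also runs for $\pi^{\mathrm{EN}}$.

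First I would establish the blow-up case: if $f\colon X \to Y$ is the blow-up of a smooth projective $Y$ along a smooth subvariety $Z$, then $\pi^{\mathrm{EN}}(X,x) \to \pi^{\mathrm{EN}}(Y,f(x))$ and $\pi^{\rm loc}(X,x)\to \pi^{\rm loc}(Y,f(x))$ are isomorphisms. For $\pi^{\mathrm{EN}}$ this is essentially the content of Proposition \ref{BU}, whose proof is written characteristic-free (it only uses \cite[Lemma 8.3]{La11}, \cite[Theorem 1.1]{BS}, and the surjectivity of $\pi^S \twoheadrightarrow \pi^{\mathrm{EN}}$, all available in characteristic $p$). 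For $\pi^{\rm loc}$ one argues the same way: faithful flatness of $\pi^{\rm loc}(X,x) \to \pi^{\rm loc}(Y,f(x))$ follows from the fact that an $F$-trivial bundle $E'$ on $Y$ pulls back to an $F$-trivial bundle on $X$ and $f^*$ is fully faithful on numerically flat bundles; the closed immersion direction follows from \cite[Proposition 2.21(b)]{DM} once one checks that for $E \in \mathcal{C}^{\rm loc}(X)$ one has $E \simeq f^*(f_*E)$ with $f_*E \in \mathcal{C}^{\rm loc}(Y)$ — and $f_*E$ is $F$-trivial because $(F_Y^n)^* f_* E \simeq f_*(F_X^n)^* E \simeq f_* \mathcal{O}_X^{\,\oplus m} \simeq \mathcal{O}_Y^{\,\oplus m}$ using $f_*\mathcal{O}_X = \mathcal{O}_Y$ and flat base change along the Frobenius.

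Next I would pass from blow-ups to arbitrary birational maps. Rather than invoking a weak factorization theorem (unavailable in positive characteristic), I would cite \cite[Theorem 1.2]{HM} directly: that theorem proves birational invariance of $\pi^S$ for smooth projective varieties over an algebraically closed field of positive characteristic by a method which, at each step, only uses properties of the bundle category that $\mathcal{C}^{\mathrm{EN}}$ and $\mathcal{C}^{\rm loc}$ also enjoy — namely stability under pullback, the projection-type isomorphism $f^*f_* V \simeq V$, and the descent criterion of \cite{DM}. Running that argument verbatim with $\mathcal{C}^S$ replaced by $\mathcal{C}^{\mathrm{EN}}$ (respectively $\mathcal{C}^{\rm loc}$) yields $\pi^{\mathrm{EN}}(X,x_0) \simeq \pi^{\mathrm{EN}}(Y,\phi(x_0))$ and $\pi^{\rm loc}(X,x_0) \simeq \pi^{\rm loc}(Y,\phi(x_0))$; one only needs the basepoint $x_0$ to be a point where $\phi$ is defined so that the comparison morphisms are pointed.

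The main obstacle is the second step: making precise that the proof of \cite[Theorem 1.2]{HM} is genuinely insensitive to which of these three Tannakian subcategories one feeds it. I expect the delicate point to be verifying that $f_*E$ stays in the subcategory for the morphisms $f$ that arise in that proof (not just honest blow-ups but the intermediate maps it constructs), which for $\mathcal{C}^{\mathrm{EN}}$ is the rank-induction argument already carried out in Proposition \ref{BU}, and for $\mathcal{C}^{\rm loc}$ is the Frobenius-commutation computation sketched above; once these local checks are in hand, the rest of \cite{HM} transfers formally. Accordingly I would present Corollary \ref{BIP} as an application of \cite[Theorem 1.2]{HM} combined with Proposition \ref{BU} and the $\pi^{\rm loc}$-analogue of Proposition \ref{BU}, spelling out only the two pushforward verifications and leaving the combinatorial factorization bookkeeping to the reference.
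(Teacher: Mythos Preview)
Your overall strategy matches the paper's: invoke \cite[Theorem 1.2]{HM} as the framework, then verify that the relevant pushforward $q_*p^*$ (for $p\colon Z\to X$, $q\colon Z\to Y$ the projections from the normalization of the graph closure) preserves $\mathcal{C}^{\mathrm{EN}}$ and $\mathcal{C}^{\rm loc}$. The paper also separates out $\dim X\le 2$ (where birational maps of smooth surfaces factor into blow-ups, so Proposition \ref{BU} applies directly) from $\dim X\ge 3$.

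There are, however, two concrete technical points you have not handled and which the paper does supply. First, your rank-induction ``already carried out in Proposition \ref{BU}'' relies on $f^*f_*V\simeq V$ from \cite[Lemma 8.3]{La11}, valid for a blow-up of a smooth variety along a smooth centre; but $Z$ is only normal and $q$ is not such a blow-up, so this isomorphism is not available. The paper instead proves $\mathrm{rank}\,q_*p^*V=\mathrm{rank}\,V$ by restricting to a general smooth surface $S\subset Y$, lifting to a smooth surface $T$ over $Z$, and applying \cite[Lemma 8.3]{La11} to the surface map $\pi\colon T\to S$; this is what makes the last map in the pushed-forward short exact sequence surjective and lets the induction run. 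Second, your justification $(F_Y^n)^*f_*E\simeq f_*(F_X^n)^*E$ via ``flat base change along the Frobenius'' is fine when the source is smooth, but for $q\colon Z\to Y$ with $Z$ merely normal the Frobenius on $Z$ need not be flat; the paper instead cites \cite[Lemma 3.1]{HM} for $(F_Y^n)^*q_*W\simeq q_*(F_Z^n)^*W$ and uses $q_*\mathcal{O}_Z=\mathcal{O}_Y$. Once you plug in these two ingredients, your outline becomes the paper's proof.
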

\begin{proof}
If $\dim X \leq 2$, then the result follows from \cite[Chapter-V, Proposition 5.3]{Ha} and Proposition \ref{BU}.
Assume that $\dim X \geq 3$.
Consider the diagram
$$
    \xymatrix{    
   & Z \ar[dl]_{p} \ar[dr]^{q} & \\
    X \ar@{.>}[rr]_{\phi}  &     & Y   } 
$$
where $Z$ is the normalization of a closure of the graph of $\phi$, the morphisms $p$ and $q$ are birational. In view of \cite[Theorem 1.2]{HM}, we need to show that $q_*p^*E$ is a semi--essentially finite whenever $E$ is a semi--essentially finite bundle on $X$.

Since $q_*p^*$ commutes with the tensor product, it follows that $q_*p^*E$ is a finite bundle whenever $E$ is a finite bundle on $X$. Let $E$ be an essentially finite bundle on $X$. Then, there is $V_1,V_2\in \mathcal{C}^{\rm nf}(X)$ such that $V_2$ is subbundle of $V_1$, $V_1$ is subbundle of finite bundle and $E\simeq V_1/V_2$. We have the following exact sequence:
$$0\rightarrow V_2 \rightarrow V_1 \rightarrow V_1/V_2 \rightarrow 0$$
$$0\rightarrow q_*p^*V_2 \rightarrow q_*p^*V_1 \rightarrow q_*p^*(V_1/V_2) \rightarrow 0$$

First note that for any $V\in \mathcal{C}^{\rm nf}(X)$, the rank of $q_*p^*V$ is the same as that of $V$. To see this,
let us adopt the setting mentioned in proof of \cite[Theorem 1.2]{HM}. Let $S$ be a general smooth surface in $Y$. Since 
$q\colon Z\rightarrow Y$ is birational, there exists a smooth projective surface $T$ and a proper birational morphism 
$\pi \colon T\longrightarrow S$ with the following commutative diagram:
\begin{equation}\label{eq-diag}
    \xymatrix{    
   T \ar[r]^{h} \ar[d]_{\pi}& Z \ar[d]^{q} \\
   S \ar[r]_{i}  & Y } 
\end{equation}
Let $W=p^*V$. Then, the morphism $i^*q_*W\rightarrow \pi_*h^*W$ is an isomorphism. Hence, we get
$$
(i\circ \pi)^*q_*W = \pi^*i^*q_*W\simeq \pi^*\pi_*h^*W \simeq h^*W
$$ in which last isomorphism follows from \cite[Lemma 8.3]{La11}. By the commutativity of diagram \ref{eq-diag}, 
we have $h^*W\simeq (i\circ \pi)^*q_*W \simeq (q\circ h)^*q_*W = h^*q^*q_*W$. Therefore, the rank of $W$ is the same 
as the rank of $q_*W$.

Hence, $q_*p^*E \simeq q_*p^*(V_1/V_2) \simeq (q_*p^*V_1)/(q_*p^*V_2)$. This proves that $q_*p^*E$ is an essentially finite bundle on $Y$.

By the similar arguments as in the proof of Proposition \ref{BU}, using induction on rank, we can prove $q_*p^*E$ is a semi--essentially finite bundle whenever $E$ is a semi--essentially finite bundle. 

To see that $\pi^{\rm loc}(X,x_0) \simeq \pi^{\rm loc}(Y,\phi(x_0))$, it is enough to prove $q_*p^*V$ is $F$-trivial bundle on $Y$ whenever $V$ is $F$-trivial on $X$.
Let $V$ be $F$-trivial bundle on $X$. Then, $W:=p^*V$ is $F$-trivial on $Z$. i.e. $(F_Z^n)^*(W)\simeq \mathcal{O}_Z^{\oplus r}$ for some $n,r\in \mathbb{N}$. From \cite[Lemma 3.1]{HM}, we have $(F_X^n)^*(q_*W)\simeq q_*((F_Z^n)^*(W)) \simeq q_*(\mathcal{O}_Z^{\oplus r}) \simeq (q_*\mathcal{O}_Z)^{\oplus r} = (\mathcal{O}_Y)^{\oplus r}$. Hence, $q_*p^*V$ is $F$-trivial on $Y$.
\end{proof}

\subsection{Behaviour under \'{e}tale morphism}
Let $\psi: X\longrightarrow Y$ be an \'{e}tale Galois cover with a finite Galois group $G$. 
Let $x_0\in X$ and $y_0\in Y$ be $k$-points such that $f(x_0)=y_0$. 
The pullback by $\psi$ induces a map $\psi^*:\mathcal{C}^{\mathrm{EN}}(Y)\longrightarrow \mathcal{C}^{\mathrm{EN}}(X)$ 
which is a morphism of Tannakian categories. So, this induces a morphism
$\tilde{\psi}: \pi^{\mathrm{EN}}(X,x_0)\longrightarrow \pi^{\mathrm{EN}}(Y,y_0)$.
Consider the following diagram:
\[
\xymatrix@C=2.5pc{
**[l]X\times G \ar[dr]_{p_1}
\ar@/^20pt/[rr]^\mu \ar[r]^-\simeq & X\times_Y X
\ar[d]^-p \ar[r]_-p & X \ar[d]^\psi \\
& X \ar[r]_\psi & Y
}
\]
For any $E\in \mathcal{C}^{\mathrm{EN}}(X)$, we have the followings:
$$
\psi^*\psi_*E \simeq p_*p^*E \simeq p_{1_*}\mu^*E \simeq \bigoplus_{g\in G}g^*E 
$$

\begin{proposition}\label{Et}
The map $\tilde{\psi}: \pi^{\mathrm{EN}}(X,x_0)\longrightarrow \pi^{\mathrm{EN}}(Y,y_0)$ is a closed immersion.
\end{proposition}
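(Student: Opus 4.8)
The plan is to apply the Deligne--Milne criterion \cite[Proposition 2.21(b)]{DM}: the homomorphism $\tilde{\psi}$ is a closed immersion if and only if every object of $\mathcal{C}^{\mathrm{EN}}(X)$ is isomorphic to a subquotient of $\psi^{*}W$ for some $W\in\mathcal{C}^{\mathrm{EN}}(Y)$. The isomorphism displayed above, $\psi^{*}\psi_{*}E\simeq\bigoplus_{g\in G}g^{*}E$, already supplies the needed objects: the summand indexed by the identity of $G$ is $E$ itself, so $E$ is a direct summand, in particular a subquotient, of $\psi^{*}(\psi_{*}E)$. Hence the statement reduces entirely to showing that $\psi_{*}$ carries $\mathcal{C}^{\mathrm{EN}}(X)$ into $\mathcal{C}^{\mathrm{EN}}(Y)$, i.e.\ that the direct image of a semi--essentially finite bundle along a finite \'etale cover is again semi--essentially finite.

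First I would dispose of the essentially finite case. As $\psi$ is finite and flat, $\psi_{*}$ is exact and preserves local freeness, so for $E$ essentially finite on $X$ the sheaf $\psi_{*}E$ is a vector bundle of rank $|G|\cdot\operatorname{rk}E$. It is numerically flat: indeed $\psi^{*}\psi_{*}E\simeq\bigoplus_{g}g^{*}E$ is a finite direct sum of numerically flat bundles (each $g$ being an automorphism of $X$), and numerical flatness descends along the finite surjective morphism $\psi$ (cf.\ \cite{La11}); thus $\psi_{*}E$ is Nori--semistable. To see it is essentially finite, note that $\bigoplus_{g}g^{*}E$ is essentially finite on $X$, so by \cite[Theorem 1.1]{BS} there is a proper $k$-scheme $P$ with a proper surjective morphism $\rho\colon P\to X$ such that $\rho^{*}\!\big(\bigoplus_{g}g^{*}E\big)$ is trivial; then $\psi\circ\rho\colon P\to Y$ is proper surjective and $(\psi\circ\rho)^{*}\psi_{*}E\simeq\rho^{*}\psi^{*}\psi_{*}E$ is trivial, so by the characterization of essentially finite bundles (see \cite[Theorem 1.1]{BS}) the Nori--semistable bundle $\psi_{*}E$ is essentially finite on $Y$. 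A more Tannakian alternative would be to show directly that $\psi_{*}$ preserves finiteness — viewing $\psi_{*}$ as the induction functor along the closed immersion $\pi^{N}(X,x_{0})\hookrightarrow\pi^{N}(Y,y_{0})$, whose quotient is the finite group $G$ — and then, by applying the exact functor $\psi_{*}$ to a presentation of $E$ as a Nori--semistable subquotient of a finite bundle $F$, conclude that $\psi_{*}E$ is a subquotient of the finite bundle $\psi_{*}F$.

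Next I would handle a general semi--essentially finite bundle $E$, with a filtration $E=E_{0}\supset E_{1}\supset\cdots\supset E_{n}=0$ whose successive quotients $E_{i}/E_{i+1}$ are indecomposable essentially finite. Exactness of $\psi_{*}$ produces a filtration of $\psi_{*}E$ with successive quotients $\psi_{*}(E_{i}/E_{i+1})$, each essentially finite on $Y$ by the previous step. Refining this filtration by the Krull--Schmidt decomposition of each $\psi_{*}(E_{i}/E_{i+1})$ into indecomposable bundles — indecomposable direct summands of an essentially finite bundle are again essentially finite, since $\mathcal{C}^{N}(Y)$ is abelian and closed under direct summands — one obtains a filtration of $\psi_{*}E$ all of whose successive quotients are indecomposable essentially finite bundles, so that $\psi_{*}E\in\mathcal{C}^{\mathrm{EN}}(Y)$. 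This is the exact analogue of the induction on rank in the proof of Proposition \ref{BU}, made slightly simpler here by the exactness of $\psi_{*}$.

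The step I expect to be the main obstacle is precisely the claim that $\psi_{*}$ sends essentially finite bundles to essentially finite bundles — equivalently, that $\psi_{*}E$ lies in the Tannakian subcategory of $\mathcal{C}^{\mathrm{nf}}(Y)$ generated by finite bundles; once that is secured, the remainder is formal Tannakian bookkeeping resting only on the $G$-equivariant computation of $\psi^{*}\psi_{*}E$ already recorded above and on the Deligne--Milne criterion.
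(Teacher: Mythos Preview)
Your proposal is correct and follows the same strategy as the paper: reduce via \cite[Proposition~2.21(b)]{DM} to showing that $\psi_{*}$ carries $\mathcal{C}^{\mathrm{EN}}(X)$ into $\mathcal{C}^{\mathrm{EN}}(Y)$, handle the essentially finite case first, then pass to the filtered case, and finish by exhibiting $E$ as a summand of $\psi^{*}\psi_{*}E\simeq\bigoplus_{g\in G}g^{*}E$. The only cosmetic differences are that the paper proves $\psi_{*}E$ essentially finite using the \'etale--$F$-trivial characterization from \cite{BH07} rather than the proper-trivialization criterion of \cite{BS}, and it checks surjectivity of $\psi_{*}E\to\psi_{*}(E/E_{n-1})$ by a rank count instead of invoking exactness of $\psi_{*}$ for the finite map $\psi$ as you do.
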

\begin{proof}
If $E$ is an essentially finite bundle on $X$, then there exist a connected \'{e}tale Galois cover 
$\gamma: \hat{X}\longrightarrow X$ such that $\gamma^*E$ is $F$-trivial bundle (if char $k = 0$, 
then $\gamma^*E$ is trivial). Note that $\psi\circ\gamma:\hat{X}\longrightarrow Y$ 
\'{e}tale covering of $Y$. Since
$$
(\psi\circ\gamma)^*(\psi_*E) \simeq \gamma^*\psi^*\psi_*E \simeq 
\gamma^*(\bigoplus_{g\in G}E)\simeq \bigoplus_{g\in G} \gamma^*g^*E\;,
$$
it follows that $(\psi\circ\gamma)^*(\psi_*E)$ is $F$-trivial (or trivial, if char $k = 0$).
Hence, the bundle $\psi_*E$ is an essentially finite.

Let $E$ be a semi--essentially finite bundle of rank $r$ on $X$, i.e., there is a filtration
$$
E=E_0\supset E_1\supset E_2 \supset...\supset E_{n-1}\supset E_n=0
$$ 
such that $E_i/E_{i+1}$ is an indecomposable essentially finite bundle.
From the filtration, $E_{n-1}$ is essentially finite subbundle of $E$ and $E/E_{n-1}$ is 
semi--essentially finite bundle on $X$. Then, $\psi_*E_{n-1}$ is essentially finite and by 
induction, we have $\psi_*(E/E_{n-1})$ is semi--essentially finite bundle.

Let $r(V)$ denote the rank of vector bundle $V$. Consider the following exact sequence:
\begin{align}
0 \longrightarrow \psi_*E_{n-1} \longrightarrow \psi_*E \longrightarrow \psi_*(E/E_{n-1}) \label{A2}
\end{align}
We have 
$$
r(\psi_*(E/E_{n-1})) = |G|\cdot r(E/E_{n-1}) = |G|\cdot (r(E)-r(E_{n-1})) = r(\psi_*E)-r(\psi_*E_{n-1}).
$$
Hence, the last map of (\ref{A2}) is 
surjective. Since the category of semi--essentially finite bundle is closed under taking 
extensions, it follows that $\psi_*E$ is a semi--essentially finite bundle.

We need to prove that any $E\in \mathcal{C}^{\mathrm{EN}}(X)$ is subquotient of the $\psi^*E'$ for 
some $E'\in \mathcal{C}^{\mathrm{EN}}(Y)$. Let $E'=\psi_*E$ then $\psi^*\psi_*E \simeq \bigoplus_{g\in G} g^*E$. 
Clearly, $E$ is subquotient of $\bigoplus_{g\in G} g^*E$. Hence, $\tilde{\psi}$ is a closed immersion. 
\end{proof}

By using the criteria \cite[Theorem A.1]{ES} and Proposition \ref{Et}, we have:
\begin{cor}
If $\psi: X\longrightarrow Y$ is an \'{e}tale Galois cover with a finite Galois group $G$, 
then we get the following exact sequence:
$$
0\rightarrow \pi^{\mathrm{EN}}(X,x_0) \rightarrow \pi^{\mathrm{EN}}(Y, y_0) \rightarrow G \rightarrow 0.
$$
\end{cor}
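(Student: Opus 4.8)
The plan is to invoke the exact--sequence criterion of \cite[Theorem A.1]{ES}, so that after producing the two homomorphisms it remains only to verify its hypotheses. First I would construct the homomorphism $q\colon\pi^{\mathrm{EN}}(Y,y_{0})\to G$. The associated--bundle construction $V\mapsto X\times^{G}V\;\bigl(\simeq(\psi_{*}(\mathcal{O}_{X}\otimes_{k}V))^{G}\bigr)$ is a $k$--linear tensor functor $\mathrm{Rep}(G)\to\mathcal{C}^{\mathrm{EN}}(Y)$: its values are \'etale trivializable, hence finite, hence semi--essentially finite; and evaluating the fibre functor of $\mathcal{C}^{\mathrm{EN}}(Y)$ at $y_{0}$ through the chosen point $x_{0}\in\psi^{-1}(y_{0})$ identifies the composite $\mathrm{Rep}(G)\to\mathcal{C}^{\mathrm{EN}}(Y)\to\mathrm{Vec}_{k}$ with the forgetful functor. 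By Tannaka duality this yields $q$. Since $X\times_{Y}X\simeq X\times G$ is the trivial $G$--torsor, pulling the functor back along $\psi$ gives the constant functor $V\mapsto V\otimes_{k}\mathcal{O}_{X}$, so the composite $q\circ\tilde{\psi}$ is trivial.

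Next I would verify the two structural hypotheses. That $\tilde{\psi}\colon\pi^{\mathrm{EN}}(X,x_{0})\to\pi^{\mathrm{EN}}(Y,y_{0})$ is a closed immersion is precisely Proposition \ref{Et}. For faithful flatness of $q$ it suffices, by \cite[Proposition 2.21(a)]{DM}, that $\mathrm{Rep}(G)\to\mathcal{C}^{\mathrm{EN}}(Y)$ be fully faithful with essential image stable under sub--objects. Fullness is the computation $\mathrm{Hom}_{\mathcal{C}^{\mathrm{EN}}(Y)}(X\times^{G}V,\,X\times^{G}V')\simeq\bigl(\mathrm{Hom}_{k}(V,V')\otimes_{k}H^{0}(X,\mathcal{O}_{X})\bigr)^{G}=\mathrm{Hom}_{G}(V,V')$, using that $X$ is connected. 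For stability under sub--objects: a sub--object of $X\times^{G}V$ pulls back to a sub--object of the trivial object $\mathcal{O}_{X}\otimes_{k}V$ in $\mathcal{C}^{\mathrm{EN}}(X)$, hence is itself trivial, and a $G$--equivariant trivial sub--bundle on the connected $X$ (relative to the torsor $\psi$) descends to $X\times^{G}V_{0}$ for a sub--representation $V_{0}\subseteq V$, so the sub--object lies in the image.

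The substantive point is the remaining ``middle exactness'' hypothesis: for every $E\in\mathcal{C}^{\mathrm{EN}}(Y)$ the maximal trivial sub--object of $\psi^{*}E$ in $\mathcal{C}^{\mathrm{EN}}(X)$ should be the pull--back of a sub--object of $E$ lying in the essential image of $\mathrm{Rep}(G)$. I would argue as follows. Let $W\subseteq\psi^{*}E$ be the maximal trivial sub--object, which exists because $\mathcal{C}^{\mathrm{EN}}(X)$ is Tannakian. Being intrinsically characterised, $W$ is carried into itself by each $g\in G$ acting on $X$ over $Y$, so it acquires a $G$--equivariant structure for the torsor $\psi$; by descent $W\simeq\psi^{*}E_{0}$ for a saturated sub--bundle $E_{0}\subseteq E$. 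Since $\mathcal{C}^{\mathrm{EN}}(Y)$ is abelian and $E_{0}\hookrightarrow E$ is a monomorphism there, both $E_{0}$ and $E/E_{0}$ lie in $\mathcal{C}^{\mathrm{EN}}(Y)$, so $E_{0}$ is a sub--object of $E$. Under $\mathrm{QCoh}(Y)\simeq\mathrm{QCoh}^{G}(X)$ the bundle $E_{0}$ corresponds to the trivial bundle $W$ with a $G$--equivariant structure, which on the connected $X$ amounts to a $G$--action on $\Gamma:=H^{0}(X,W)$; hence $E_{0}\simeq X\times^{G}\Gamma$ lies in the image of $\mathrm{Rep}(G)$. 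Finally $E_{0}$ is maximal with this property: any sub--object $E_{1}\subseteq E$ in that image has $\psi^{*}E_{1}$ trivial, so $\psi^{*}E_{1}\subseteq W=\psi^{*}E_{0}$ and $E_{1}\subseteq E_{0}$.

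I expect this last step to be the main obstacle: one must check that the Tannakian maximal trivial sub--object descends correctly along the Galois torsor, that the descended bundle $E_{0}$ is a bona fide sub--object in $\mathcal{C}^{\mathrm{EN}}(Y)$ --- semi--essentially finite bundles are not visibly closed under passing to arbitrary sub--bundles, so one must reason inside the abelian structure of the Tannakian category rather than with vector bundles alone --- and that the equivariant structure on $E_{0}$ genuinely exhibits it as an associated bundle. Once these are in place, \cite[Theorem A.1]{ES} produces the asserted short exact sequence $0\to\pi^{\mathrm{EN}}(X,x_{0})\to\pi^{\mathrm{EN}}(Y,y_{0})\to G\to 0$.
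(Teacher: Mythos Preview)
Your proposal is correct and follows the same approach as the paper: both invoke the exact-sequence criterion \cite[Theorem A.1]{ES} together with Proposition~\ref{Et}. The paper's proof is a one-line citation of these two ingredients, whereas you have supplied the detailed verification of each hypothesis of the Esnault--Hai--Sun criterion (construction of $q$, faithful flatness of $q$, and the middle-exactness condition), all of which the paper leaves implicit.
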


\section{An extension of the local fundamental group scheme}
Let $X$ be a smooth projective variety over the algebraically closed field of characteristic $p>0$. 
Let $F_X \colon X\longrightarrow X$ be a Frobenius morphism.

\begin{definition}
A vector bundle $V$ on $X$ is called $F$-semi trivial if there is a filtration
$$V=V_0\supset V_1\supset ...\supset V_{n-1}\supset V_n=0$$
such that $V_i/V_{i+1}$ is indecomposable $F$-trivial bundle on $X$ for all $i\in\{0,...,n-1\}$.
\end{definition}

Let $\mathcal{C}^{\rm loc}_E(X)$ denote the category of $F$-semi trivial bundles on $X$. Note that simple objects of this category are indecomposable $F$-trivial bundles on $X$, and this category is closed under an extension. By following arguments of \cite[Proposition 2.14]{Ot}, one can prove that the category $\mathcal{C}^{\rm loc}_E(X)$ is $k$-linear abelian rigid tensor category. We give details for completion in the following Proposition \ref{Eloc-T}.

\begin{proposition}{\rm(cf. \cite[Proposition 2.14]{Ot})}\label{Eloc-T}
The category $\mathcal{C}^{\rm loc}_E(X)$ is $k$-linear abelian rigid tensor category.
\end{proposition}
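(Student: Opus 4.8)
The plan is to mimic the proof of \cite[Proposition 2.14]{Ot}, which establishes the analogous statement for semi-finite bundles, replacing ``finite'' by ``$F$-trivial'' throughout. Concretely, one wants to show that $\mathcal{C}^{\rm loc}_E(X)$ is closed under the four operations needed for a $k$-linear abelian rigid tensor category: (i) kernels and cokernels (abelian structure), (ii) tensor products, (iii) duals, and (iv) that it contains $\mathcal{O}_X$ as a unit object. The essential observation driving all of this is that $\mathcal{C}^{\rm loc}(X)$, the category of $F$-trivial bundles, is already known to be a neutral Tannakian category (this is the classical fact underlying $\pi^{\rm loc}(X,x)$; see \cite{MS02}), hence is closed under all of these operations; what $\mathcal{C}^{\rm loc}_E(X)$ adds is only the extension-closure, and one must check the operations interact well with filtrations whose graded pieces are indecomposable $F$-trivial.

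\textbf{First}, for the abelian structure, I would argue as follows. Since $\mathcal{C}^{\rm loc}_E(X)$ is a full subcategory of the category of vector bundles on $X$, it suffices to show it is closed under kernels and cokernels of morphisms between its objects, and that these are again locally free. The key point is that every object of $\mathcal{C}^{\rm loc}_E(X)$ is numerically flat (an $F$-trivial bundle is numerically flat, and extensions of numerically flat bundles are numerically flat), so $\mathcal{C}^{\rm loc}_E(X) \subset \mathcal{C}^{\rm nf}(X)$, and $\mathcal{C}^{\rm nf}(X)$ is abelian by \cite{La11, La12}. Thus kernels and cokernels computed in $\mathcal{C}^{\rm nf}(X)$ are locally free; one then needs to check they lie in $\mathcal{C}^{\rm loc}_E(X)$, i.e.\ admit a filtration with indecomposable $F$-trivial graded pieces. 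For this I would use that a subquotient of an $F$-trivial bundle is $F$-trivial (since $\mathcal{C}^{\rm loc}(X)$ is abelian), together with a refinement-of-filtrations argument: given a sub- or quotient bundle, intersect/project the given filtration and use the Jordan--H\"older-type property that an $F$-trivial bundle decomposes into indecomposable $F$-trivial summands.

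\textbf{Next}, for the tensor product, given $V, W \in \mathcal{C}^{\rm loc}_E(X)$ with filtrations having indecomposable $F$-trivial graded pieces $V_i/V_{i+1}$ and $W_j/W_{j+1}$, the tensor product $V \otimes W$ inherits a filtration (the tensor product of the two filtrations) whose graded pieces are of the form $(V_i/V_{i+1}) \otimes (W_j/W_{j+1})$. Each such piece is $F$-trivial, since $(F_X^n)^*$ commutes with $\otimes$ and a tensor product of trivial bundles is trivial; it need not be indecomposable, but it decomposes as a direct sum of indecomposable $F$-trivial bundles, and refining the filtration accordingly gives $V \otimes W \in \mathcal{C}^{\rm loc}_E(X)$. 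The unit object $\mathcal{O}_X$ is $F$-trivial and indecomposable, so it lies in the category. \textbf{Finally}, for rigidity, given $V \in \mathcal{C}^{\rm loc}_E(X)$ with filtration $V = V_0 \supset \cdots \supset V_n = 0$, the dual filtration $0 = V_n^\vee \hookrightarrow \cdots \hookrightarrow V_0^\vee = V^\vee$ (more precisely, the filtration with pieces $(V/V_i)^\vee$) has graded pieces $(V_i/V_{i+1})^\vee$, each of which is $F$-trivial (dualizing commutes with $(F_X^n)^*$ up to the $F$-trivial case, and the dual of a trivial bundle is trivial) and indecomposable (dualizing preserves indecomposability of bundles). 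Hence $V^\vee \in \mathcal{C}^{\rm loc}_E(X)$, and the evaluation and coevaluation morphisms, being the standard ones for vector bundles, exhibit $V^\vee$ as the dual.

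\textbf{The main obstacle} I anticipate is the bookkeeping for the abelian structure: showing that kernels and cokernels of morphisms \emph{within} $\mathcal{C}^{\rm loc}_E(X)$, computed in the ambient abelian category $\mathcal{C}^{\rm nf}(X)$, again admit filtrations with \emph{indecomposable} $F$-trivial graded quotients. The subtlety is that a morphism need not respect the given filtrations, so one cannot simply restrict them; the argument has to go through the fact that any sub- or quotient-bundle of an object of $\mathcal{C}^{\rm loc}_E(X)$ is itself built from $F$-trivial pieces, which ultimately rests on $\mathcal{C}^{\rm loc}(X)$ being closed under subquotients. This is exactly the step handled in \cite[Proposition 2.14]{Ot} for the finite case, and the same formal argument applies verbatim with ``finite'' replaced by ``$F$-trivial''; I would therefore present this step carefully but refer to \emph{loc.\ cit.}\ for the purely formal parts, spelling out only where $F$-triviality (as opposed to finiteness) is used.
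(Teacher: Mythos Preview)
Your proposal is correct and follows essentially the same approach as the paper: closure under tensor and dual is shown by manipulating filtrations (the paper does the tensor case via a two-step induction---first $F$-trivial $\otimes$ $F$-semi trivial, then general---whereas you tensor the two filtrations directly, but this is a cosmetic difference), and the abelian structure is deferred to \cite[Proposition 2.14]{Ot} with ``finite'' replaced by ``$F$-trivial,'' exactly as you suggest. Your explicit remark that $\mathcal{C}^{\rm loc}_E(X)\subset\mathcal{C}^{\rm nf}(X)$, ensuring kernels and cokernels are locally free, is a point the paper leaves implicit in its reference to Otabe.
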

\begin{proof}
First, let us prove $\mathcal{C}^{\rm loc}_E(X)$ is closed under the tensor product and dual. If $E$ is $F$-trivial and $F$ is $F$-semi trivial then we have the following filtration
$$F\otimes E=F_0\otimes E\supset F_1\otimes E\supset ... \supset F_{n-1}\otimes E\supset F_n\otimes E=0$$
such that $(F_i\otimes E)/(F_{i+1}\otimes E) \simeq (F_i/F_{i+1})\otimes E$ is $F$-trivial for all $i$. So, $F\otimes E$ is $F$-semi trivial. Now, let $E$ and $F$ are $F$-semi trivial bundles. We have the filtration
$$E=E_0\supset E_1\supset ... \supset E_{m-1}\supset E_m=0$$
such that $E_i/E_{i+1}$ is indecomposable $F$-trivial for all $i$. After tensoring with $F$ we have,
$$E\otimes F=E_0\otimes F\supset E_1\otimes F\supset ... \supset E_{m-1}\otimes F\supset E_m\otimes F=0$$
such that $(E_i\otimes F)/(E_{i+1}\otimes F)\simeq E_i/E_{i+1}\otimes F$ is $F$-semi trivial.

We have the following exact sequence:
$$0\rightarrow E_{i+1}\otimes F \rightarrow E_{i}\otimes F \rightarrow E_{i}/E_{i+1}\otimes F \rightarrow 0$$
As the category $\mathcal{C}^{\rm loc}_E(X)$ closed under extensions and by induction, we have $E\otimes F$ is $F$-semi trivial. To prove $E^\vee$ is $F$-semi trivial, let us consider $(E^\vee)_i = (E/E_{m-i})^\vee$.
$$
E_{m-i}\subset E_{m-i-1} \Rightarrow E/E_{m-i}\supset E/E_{m-i-1} 
$$
$$
\Rightarrow (E/E_{m-i})^\vee \supset (E/E_{m-i-1})^\vee \Rightarrow (E^\vee)_{i} \supset (E^\vee)_{i+1}
$$

This gives the filtration 
$$E^{\vee}=(E^\vee)_{0} \supset(E^\vee)_{1} \supset... \supset(E^\vee)_{m-1} \supset (E^\vee)_{m}=0$$
such that $(E^\vee)_{i}/(E^\vee)_{i+1} \simeq (E_{m-i-1}/E_{m-i})^\vee$ which is $F$-trivial for all $i$.
Similarly, by following the arguments as in the proof of \cite[Proposition 2.14]{Ot}, it can be checked 
that $\mathcal{C}^{\rm loc}_E(X)$ is an abelian category.
\end{proof}

For a rational point $x\in X(k)$, the category $\mathcal{C}^{\rm loc}_E(X)$ with the fiber functor 
$x^*:\mathcal{C}^{\rm loc}_E(X)\longrightarrow \mathbf{Vect}_k$ is a neutral Tannakian category. 
Let $\pi^{\rm loc}_E(X,x)$ denote the corresponding affine group scheme. In general, the group scheme $\pi^{\rm loc}_E(X,x)$ is bigger than $\pi^{\rm loc}(X,x)$ (see Remark \ref{rem-Eloc}).

\begin{theorem}\label{ABELocal}
Let $X$ be a smooth projective variety defined over an algebraically closed field $k$ of characteristic $p>0$. Let $x\in X(k)$ be a rational point then,
we have
$$\pi^{\rm loc}_E(X,x)_{\rm ab}\simeq \pi^{\mathrm{uni}}_{\rm ab}(X,x) \times \mathrm{Diag}(\mathbb{F}_p)$$
\end{theorem}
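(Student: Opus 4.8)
The plan is to follow the template of the proof of Theorem~\ref{ABEN}, splitting $\pi^{\rm loc}_E(X,x)_{\rm ab}$ into its unipotent part and its diagonalizable part and identifying each separately. First I would determine the largest unipotent quotient $\pi^{\rm loc}_E(X,x)_{\rm uni}$. By the standard Tannakian dictionary, its category of finite-dimensional representations is the full subcategory of $\mathcal{C}^{\rm loc}_E(X)$ whose objects admit a filtration with all successive quotients isomorphic to $\mathcal{O}_X$, that is, the category $\mathcal{C}^{\rm uni}(X)$ of unipotent bundles. For this to make sense one needs $\mathcal{C}^{\rm uni}(X)\subseteq\mathcal{C}^{\rm loc}_E(X)$; but $\mathcal{O}_X$ is an indecomposable $F$-trivial bundle, so any unipotent bundle, being an iterated extension of copies of $\mathcal{O}_X$, is by definition $F$-semi trivial. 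Hence $\pi^{\rm loc}_E(X,x)_{\rm uni}\simeq\pi^{\mathrm{uni}}(X,x)$, and therefore the unipotent part of $\pi^{\rm loc}_E(X,x)_{\rm ab}$ is $\pi^{\mathrm{uni}}_{\rm ab}(X,x)$.

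Next I would compute the character group of $\pi^{\rm loc}_E(X,x)$. A character $\pi^{\rm loc}_E(X,x)\to\mathbb{G}_m$ is a one-dimensional representation, hence by Tannakian duality it corresponds to a rank-one object of $\mathcal{C}^{\rm loc}_E(X)$. Such a line bundle $L$ carries a filtration whose successive quotients are indecomposable $F$-trivial bundles; since $L$ has rank one it is itself indecomposable, so the filtration is just $L\supset 0$ and $L$ is an $F$-trivial line bundle, i.e.\ $(F_X^n)^*L\simeq\mathcal{O}_X$, equivalently $L^{\otimes p^n}\simeq\mathcal{O}_X$, for some $n\in\mathbb{N}$. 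Conversely every such $L$ lies in $\mathcal{C}^{\rm loc}_E(X)$. Thus the character group of $\pi^{\rm loc}_E(X,x)$ is exactly $\mathbb{F}_p$, and the maximal diagonalizable quotient of $\pi^{\rm loc}_E(X,x)_{\rm ab}$ is $\mathrm{Diag}(\mathbb{F}_p)$.

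Finally, by \cite[9.5]{W} the maximal abelian quotient of the affine group scheme $\pi^{\rm loc}_E(X,x)$ decomposes as the direct product of its unipotent part and its diagonalizable part, so combining the two computations yields
$$\pi^{\rm loc}_E(X,x)_{\rm ab}\simeq\pi^{\mathrm{uni}}_{\rm ab}(X,x)\times\mathrm{Diag}(\mathbb{F}_p).$$
I expect the only delicate points to be the two exhaustiveness claims, namely that the unipotent subcategory of $\mathcal{C}^{\rm loc}_E(X)$ is precisely $\mathcal{C}^{\rm uni}(X)$ and that every rank-one object of $\mathcal{C}^{\rm loc}_E(X)$ is an $F$-trivial line bundle; but both reduce to the observation that rank-one bundles and $\mathcal{O}_X$ are indecomposable, so no genuine obstacle arises, and the argument is essentially a transcription of the proofs of Theorem~\ref{ABEN} and Theorem~\ref{ABLocal} into the $F$-semi trivial setting.
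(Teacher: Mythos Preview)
Your proposal is correct and follows essentially the same approach as the paper: identify the unipotent part via the inclusion $\mathcal{C}^{\rm uni}(X)\subseteq\mathcal{C}^{\rm loc}_E(X)$, identify the diagonal part by showing that the rank-one objects of $\mathcal{C}^{\rm loc}_E(X)$ are exactly the $F$-trivial line bundles $\mathbb{F}_p$, and conclude via \cite[9.5]{W}. In fact you supply more justification than the paper does (the paper simply asserts both identifications without the indecomposability arguments you spell out), so nothing needs to be added.
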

\begin{proof}
Note that $\mathcal{C}^{\rm uni}(X)$ is the full subcategory of $\mathcal{C}^{\rm loc}_E(X)$. So, unipotent part of $\pi^{\rm loc}_E(X,x)_{\rm ab}$ is $\pi^{\mathrm{uni}}_{\rm ab}(X,x)$.

For a diagonal part, a group of line bundles in $\mathcal{C}^{\rm loc}_E(X)$ is same as the group $\mathbb{F}_p$ of line bundles in $\mathcal{C}^{\rm loc}(X)$. Hence, the theorem follows from the decomposition of a commutative group scheme \cite[9.5]{W}.
\end{proof}

\begin{cor}
Let $A$ be an abelian variety of $p$-rank $r$ defined over an algebraically closed field $k$ of characteristic $p>0$ then,
we have
$$\pi^{\rm loc}_E(A,0)\simeq \pi^{\mathrm{uni}}(A,0) \times \varprojlim_{n\in \mathbb{N}} (\mu_{p^n})^r$$
\end{cor}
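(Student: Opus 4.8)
The plan is to derive the corollary from Theorem~\ref{ABELocal} by showing that, over an abelian variety, $\pi^{\mathrm{loc}}_E(A,0)$ coincides with its own abelian part, and then reading off the two factors from results already established.

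The first step is to place $\mathcal{C}^{\mathrm{loc}}_E(A)$ inside the category of numerically flat bundles: every $F$-trivial bundle is numerically flat, and numerical flatness is preserved under extensions, so an $F$-semi trivial bundle --- being an iterated extension of indecomposable $F$-trivial bundles --- is numerically flat, i.e.\ $\mathcal{C}^{\mathrm{loc}}_E(A)\subseteq\mathcal{C}^{\mathrm{nf}}(A)$. Hence $\pi^{\mathrm{loc}}_E(A,0)$ is a quotient of the $S$-fundamental group scheme $\pi^S(A,0)$. Since $\pi^S(A,0)$ is commutative --- this is part of Langer's structure theorem for the $S$-fundamental group scheme of an abelian variety \cite[Theorem 6.1]{La12} --- the group scheme $\pi^{\mathrm{loc}}_E(A,0)$ is commutative, so $\pi^{\mathrm{loc}}_E(A,0)=\pi^{\mathrm{loc}}_E(A,0)_{\mathrm{ab}}$. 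The same remark applied to $\mathcal{C}^{\mathrm{uni}}(A)\subseteq\mathcal{C}^{\mathrm{nf}}(A)$ shows $\pi^{\mathrm{uni}}(A,0)$ is commutative, so $\pi^{\mathrm{uni}}_{\mathrm{ab}}(A,0)=\pi^{\mathrm{uni}}(A,0)$.

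The second step is to invoke Theorem~\ref{ABELocal} for $X=A$, which gives $\pi^{\mathrm{loc}}_E(A,0)_{\mathrm{ab}}\simeq\pi^{\mathrm{uni}}_{\mathrm{ab}}(A,0)\times\mathrm{Diag}(\mathbb{F}_p)$, and then to identify the diagonal factor by reusing the computation already made in the proof of Theorem~\ref{ABLocal}: when $A$ has $p$-rank $r$, the group $\mathbb{F}_p$ of $F$-trivial line bundles is $\varinjlim_n(\mathbb{Z}/p^n\mathbb{Z})^r$, so $\mathrm{Diag}(\mathbb{F}_p)=\mathrm{Spec}\,k[\mathbb{F}_p]\simeq\varprojlim_n(\mu_{p^n})^r$. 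Combining the three identities yields the desired isomorphism $\pi^{\mathrm{loc}}_E(A,0)\simeq\pi^{\mathrm{uni}}(A,0)\times\varprojlim_n(\mu_{p^n})^r$.

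The only step that is not bookkeeping is the commutativity of $\pi^S(A,0)$; with Langer's theorem in hand there is no real obstacle. If one wished to argue directly (without \cite{La12}), one could consider the homomorphism $\pi^{\mathrm{loc}}_E(A,0)\to\pi^{\mathrm{uni}}(A,0)\times\pi^{\mathrm{loc}}(A,0)$ induced by the inclusions $\mathcal{C}^{\mathrm{uni}}(A),\mathcal{C}^{\mathrm{loc}}(A)\hookrightarrow\mathcal{C}^{\mathrm{loc}}_E(A)$; its target is commutative by Theorem~\ref{ABLocal}, so it would suffice to show the map is a closed immersion. Via the Tannakian criterion this reduces to showing that every $F$-semi trivial bundle on $A$ is a subquotient of a direct sum of bundles $U\otimes V$ with $U$ unipotent and $V$ an $F$-trivial bundle, using that on an abelian variety every indecomposable $F$-trivial bundle is of the form $U\otimes L$ with $U$ unipotent and $L$ a $p$-power torsion line bundle. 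In that approach the delicate point would be verifying this subquotient condition, since the tensor subcategory generated by $\mathcal{C}^{\mathrm{uni}}(A)$ and $\mathcal{C}^{\mathrm{loc}}(A)$ is not a priori closed under extensions.
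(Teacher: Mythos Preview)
Your proposal is correct and follows essentially the same route as the paper: apply Theorem~\ref{ABELocal} to $X=A$, use the computation of $\mathrm{Diag}(\mathbb{F}_p)\simeq\varprojlim(\mu_{p^n})^r$ from the proof of Theorem~\ref{ABLocal}, and drop the ``ab'' subscripts because all relevant group schemes are already commutative for an abelian variety. The paper's own proof is a single sentence citing these two theorems and leaves the commutativity step implicit; you have made it explicit via Langer's structure theorem for $\pi^S(A,0)$, which is exactly the justification the paper relies on elsewhere (cf.\ Remarks~\ref{R1}, \ref{R2}).
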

\begin{proof}
The result follows using Theorem \ref{ABELocal} and Theorem \ref{ABLocal}.
\end{proof}

\begin{cor}
We have the following exact sequence:
$$0\rightarrow \varprojlim_{G \subset \mathrm{Pic}^0X} \widehat{G/G_{red}} \times \mathrm{Diag}\left(\varinjlim_{n\in \mathbb{N}}\: _{p^n}\mathrm{NS}(X)_{\mathrm{tor}}\right)\rightarrow \pi^{\rm loc}_{E}(X,x)_{\rm ab} \rightarrow \pi^{\rm loc}_E(\mathrm{Alb}\:X,0_{\mathrm{Alb}\:X}) \rightarrow 0$$
where the inverse limit is taken over all the local group subschemes $G$ of $\mathrm{Pic}^0X$.
\end{cor}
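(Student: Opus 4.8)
The plan is to run the argument of Proposition \ref{BUALB} with $\pi^{\rm loc}$ replaced by $\pi^{\rm loc}_E$ and Remark \ref{ABX} by Theorem \ref{ABELocal}. The Albanese morphism $alb_X\colon X\to\mathrm{Alb}\:X$, with $alb_X(x)=0$, induces $\varphi\colon\pi^{\rm loc}_E(X,x)_{\rm ab}\to\pi^{\rm loc}_E(\mathrm{Alb}\:X,0)$. By the previous corollary, $\pi^{\rm loc}_E(\mathrm{Alb}\:X,0)\simeq\pi^{\rm uni}(\mathrm{Alb}\:X,0)\times\varprojlim_n(\mu_{p^n})^r$ is abelian (the unipotent fundamental group scheme of an abelian variety is abelian), so no generality is lost in taking it, rather than its abelianization, as the target. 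By Theorem \ref{ABELocal} applied to $X$ and to $\mathrm{Alb}\:X$,
$$\pi^{\rm loc}_E(X,x)_{\rm ab}\simeq\pi^{\rm uni}_{\rm ab}(X,x)\times\mathrm{Diag}(\mathbb{F}_p(X)),\qquad\pi^{\rm loc}_E(\mathrm{Alb}\:X,0)\simeq\pi^{\rm uni}_{\rm ab}(\mathrm{Alb}\:X,0)\times\mathrm{Diag}(\mathbb{F}_p(\mathrm{Alb}\:X)),$$
and by \cite[9.5]{W} the morphism $\varphi$ decomposes as $\varphi_u\times\varphi_d$ along these unipotent and diagonal factors; it thus suffices to show each of $\varphi_u,\varphi_d$ is faithfully flat and to compute its kernel. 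Throughout we use $\mathrm{Pic}^\tau(\mathrm{Alb}\:X)\simeq\mathrm{Pic}^0(\mathrm{Alb}\:X)$ and the isomorphism $alb_X^*\colon\mathrm{Pic}^0(\mathrm{Alb}\:X)\xrightarrow{\ \sim\ }(\mathrm{Pic}^0X)_{\rm red}$.

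For $\varphi_u$: we have $\pi^{\rm uni}_{\rm ab}(X,x)\simeq\varprojlim_G\widehat G$ over the finite local subgroup schemes $G$ of $\mathrm{Pic}^0X$, and $\pi^{\rm uni}_{\rm ab}(\mathrm{Alb}\:X,0)\simeq\varprojlim_H\widehat H$ over the finite local subgroup schemes $H$ of $\mathrm{Pic}^0(\mathrm{Alb}\:X)\simeq(\mathrm{Pic}^0X)_{\rm red}$, and under these presentations $\varphi_u$ is Cartier dual to the map of ind-finite group schemes induced by the closed immersion $(\mathrm{Pic}^0X)_{\rm red}\hookrightarrow\mathrm{Pic}^0X$. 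Writing $G_{\rm red}:=G\cap(\mathrm{Pic}^0X)_{\rm red}$, every finite local $H\subseteq(\mathrm{Pic}^0X)_{\rm red}$ is of this form (take $G=H$), so the family $\{G_{\rm red}\}$ is cofinal, and Cartier duality applied to $0\to G_{\rm red}\to G\to G/G_{\rm red}\to 0$ gives $0\to\widehat{G/G_{\rm red}}\to\widehat G\to\widehat{G_{\rm red}}\to 0$. Passing to the inverse limit, which preserves exactness here since the transition maps among these finite group schemes are surjective, shows that $\varphi_u$ is faithfully flat with $\ker\varphi_u\simeq\varprojlim_G\widehat{G/G_{\rm red}}$.

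For $\varphi_d$: here $\mathbb{F}_p(X)=\varinjlim_n{}_{p^n}(\mathrm{Pic}^\tau X)_{\rm red}$ and $\mathbb{F}_p(\mathrm{Alb}\:X)=\varinjlim_n{}_{p^n}(\mathrm{Pic}^0X)_{\rm red}$, and $\varphi_d=\mathrm{Diag}(\iota)$ for the inclusion $\iota\colon\mathbb{F}_p(\mathrm{Alb}\:X)\hookrightarrow\mathbb{F}_p(X)$. Taking $p^n$-torsion in $0\to(\mathrm{Pic}^0X)_{\rm red}\to(\mathrm{Pic}^\tau X)_{\rm red}\to\mathrm{NS}(X)_{\rm tor}\to 0$ gives, for every $n$, a short exact sequence $0\to{}_{p^n}(\mathrm{Pic}^0X)_{\rm red}\to{}_{p^n}(\mathrm{Pic}^\tau X)_{\rm red}\to{}_{p^n}\mathrm{NS}(X)_{\rm tor}\to 0$ (surjectivity on the right uses divisibility of $(\mathrm{Pic}^0X)_{\rm red}(k)$); the direct limit over $n$ yields $0\to\mathbb{F}_p(\mathrm{Alb}\:X)\to\mathbb{F}_p(X)\to\varinjlim_n{}_{p^n}\mathrm{NS}(X)_{\rm tor}\to 0$; and applying the exact contravariant functor $\mathrm{Diag}(-)$ produces $0\to\mathrm{Diag}(\varinjlim_n{}_{p^n}\mathrm{NS}(X)_{\rm tor})\to\mathrm{Diag}(\mathbb{F}_p(X))\to\mathrm{Diag}(\mathbb{F}_p(\mathrm{Alb}\:X))\to 0$, just as for the diagonal part of Proposition \ref{BUALB}. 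Assembling the two factors yields the claimed short exact sequence.

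The main obstacle is the identification used for $\varphi_u$: that under the presentations $\pi^{\rm uni}_{\rm ab}\simeq\varprojlim\widehat G$ the map $\varphi_u$ really is the Cartier dual of the ind-local part of $alb_X^*$ — equivalently, that Nori's description of $\pi^{\rm uni}_{\rm ab}(-,-)$ as the Cartier dual of the ind-local subgroup scheme of $\mathrm{Pic}^0(-)$ is functorial for the Albanese pullback — together with checking that the chosen cofinal families and all the intervening direct and inverse limits are compatible. Once the identifications of the Picard schemes under $alb_X^*$ are fixed, the remaining verifications are routine and parallel those in Proposition \ref{BUALB}.
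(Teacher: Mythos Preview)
Your argument is correct and follows the same structural route as the paper: decompose $\pi^{\rm loc}_E(X,x)_{\rm ab}$ into its unipotent and diagonal factors via Theorem \ref{ABELocal}, then compute the kernel of the Albanese map on each factor separately. The only difference is packaging. The paper's proof is two lines: it observes that the unipotent part of $\pi^{\rm loc}_E(X,x)_{\rm ab}$ coincides with that of $\pi^N_{\rm ab}(X,x)$ (namely $\pi^{\rm uni}_{\rm ab}(X,x)$) and the diagonal part coincides with that of $\pi^{\rm loc}_{\rm ab}(X,x)$ (namely $\mathrm{Diag}(\mathbb{F}_p)$), and then simply cites \cite[Corollary 7.2]{La12} for the unipotent kernel and Proposition \ref{BUALB} for the diagonal kernel. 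You instead unfold those citations and redo the computations explicitly---the Cartier-dual short exact sequence $0\to\widehat{G/G_{\rm red}}\to\widehat G\to\widehat{G_{\rm red}}\to 0$ for the unipotent side, and the $p^n$-torsion argument on the N\'eron--Severi sequence for the diagonal side. Your version is more self-contained and makes the functoriality concern (your last paragraph) visible, whereas the paper's version hides those verifications inside the cited results; but there is no genuine difference in method.
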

\begin{proof}
We know that the unipotent part of $\pi^{\rm loc}_{E}(X,x)_{\rm ab}$ is same as unipotent part of $\pi^{N}_{\rm ab}(X,x)$ and the diagonal part of $\pi^{\rm loc}_{E}(X,x)_{\rm ab}$ is same as diagonal part of $\pi^{\rm loc}(X,x)_{\rm ab}$. Hence, the lemma follows from \cite[Corollary 7.2]{La12} and Proposition \ref{BUALB}.
\end{proof}

\begin{remark}
\rm
\begin{enumerate}
\item If $\pi^{\text{\'et}}(X, x)=0$, then all horizontal maps in the following diagram 
$$
    \xymatrix{    
    \pi^{\mathrm{EN}}(X,x) \ar[r] & \pi^{N}(X,x) \ar[dr] \ar[r] & \pi^{\rm loc}_E(X,x) \ar[d] \ar[r] & \pi^{\rm loc}(X,x) \ar[d] \\
     &  & \pi^{\mathrm{uni}}(X,x) \ar[r] & \pi^{\rm loc}_U(X,x) \\
   } 
$$ 
are isomorphisms \cite[Proposition 8.2]{La12}.
\item The birational invariance property of the extended local fundamental group scheme follows from the birational invariance property of the local fundamental group scheme (Corollary \ref{BIP}) and by the same argument mentioned in Proposition \ref{BU}.
\end{enumerate}

\end{remark}



\end{document}